\title{Contagions in Random Networks with Overlapping Communities}
\author{Emilie Coupechoux, {\textit{Universit\'{e} Nice Sophia Antipolis \footnote{This paper is part of the author's PhD thesis done at INRIA-ENS}}}\\
  Marc Lelarge, {\textit{INRIA - ENS}}\\
E-mail: Emilie.Coupechoux@unice.fr, Marc.Lelarge@ens.fr}
\def\Pb{\mathds{P}}
\def\Eb{\mathds{E}}
\def\Indb{\mathds{1}}
\newcommand{\cS}{{\mathcal S}}
\def\tD{\tilde{D}}
\def\bd{\bar{d}}
\def\bw{\bar{w}}
\def\boldp{\boldsymbol{p}}
\def\boldq{\boldsymbol{q}}
\def\pext{p_{\textrm{ext}}}
\def\pact{p_{\textrm{f}}}
\def\Gact{G^{(q)}_\textrm{act}}
\newcommand{\ie}{{\it i.e.~}}
\newcommand{\iid}{{\em i.i.d.~}}
\def\BP{\Gamma(\boldp,\boldq)}
\def\GBP{G_{\Gamma}(\boldp,\boldq)}
\def\BG{\mathcal{B}(\boldp,\boldq)}
\def\GBG{G_{\mathcal{B}}(\boldp,\boldq)}
\def\tW{\tilde{W}}
\def\tD{\tilde{D}}
\newtheorem{theorem}{Theorem}
\newtheorem{lemma}[theorem]{Lemma}
\newtheorem{proposition}[theorem]{Proposition}
\begin{document}

\maketitle

%{\todo Numerotations remarques et definition}

%{\todo Remplacer max en min}

%{\todo abstract de moins de 300 mots a faire}

%{\todo conclusion a faire}

\begin{abstract}
%{\todo virer algo de l'abstract}
% The algorithm we study models the diffusion of an innovation, and has direct applications in sociology or economy. We consider the case where a single individual initially possess the innovation, and we are interested in the phase transition for the appearance of a cascade, \ie when the innovation can be propagated to a non-negligeable proportion of the population.
% 
% The social network on which the diffusion takes place is modeled by a random graph inspired from the one-mode projection of random bipartite graphs with arbitrary degree distributions, that well models real-world networks. More precisely, our random graph is the one-mode projection of an alternating branching process. We show that our algorithm on this random graph can be completely represented by a multi-type (and alternating) branching process. We then use Sevastyanov's theorem about the phase transition for multi-type Galton-Watson branching processes to deduce the phase transition for the cascade phenomenon. 

We consider a threshold epidemic model on a clustered random graph with overlapping communities. In other words, our epidemic model is such that an individual becomes infected as soon as the proportion of her infected neighbors exceeds the threshold $q$ of the epidemic. In our random graph model, each individual can belong to several communities. The distributions for the community sizes and the number of communities an individual belongs to are arbitrary. 

We consider the case where the epidemic starts from a single individual, and we prove a phase transition (when the parameter $q$ of the model varies) for the appearance of a cascade, \ie when the epidemic can be propagated to an infinite part of the population. More precisely, we show that our epidemic is entirely described by a multi-type (and alternating) branching process, and then we apply Sevastyanov's theorem about the phase transition of multi-type Galton-Watson branching processes. In addition, we compute the entries of the matrix whose largest eigenvalue gives the phase transition.
\end{abstract}

\textbf{MSC classes:} 60C05, 05C80, 91D30

{\bf Keywords:} Random graphs, Threshold epidemic model, Branching processes, Clustering

\section{Introduction}

%1) def contagion

%etude epidemie sur random networks recent +++ \cite{new-book03} newman ?

%modeliser bcp de choses, mais pas comportement adoption \cite{vega07}

%contagion Blume Morris \cite{bl95,mor}

%algo deterministe, une fois que graphe fixe; le fait qu'un individu adopte un nv comportement depend de tout son voisinage -> dependence

%ptique: transition de phase

The spread of diseases or e-mail viruses is well modeled by classical (SI, SIR or SIS) epidemics, whose study on complex networks has attracted a lot of attention in recent years (see Newman \cite{new-book03} for a review).
In such epidemics, each node can be independently influenced by each of her neighbor. For the diffusion of an innovation, individual's adoption behavior is highly correlated with the behavior of her neighbors \cite{vega07}, and threshold epidemic models are more appropriate to model such diffusions. In this paper, we consider the game-theoretic contagion model, proposed by Blume \cite{bl95} and Morris \cite{mor}, and described below.

%We describe the game-theoretic contagion model proposed by Blume \cite{bl95} and Morris \cite{mor}. 
Consider a graph $G$ in which the nodes are the individuals in the population and there is an edge $(i,j)$ if $i$ and $j$ can interact with each other. Each node has a
choice between two possible actions labeled $A$ and $B$. On each
edge $(i,j)$, there is an incentive for $i$ and $j$ to have their
actions match, which is modeled as the following coordination game
parametrized by a real number $q\in (0,1)$:
if $i$ and $j$ choose $A$ (resp. $B$), they each receive a payoff of
$q$ (resp. $(1-q)$); if they choose opposite actions, then they
receive a payoff of $0$.
Then the total payoff of a player is the sum of the payoffs with each
of her neighbors. If the degree of node $i$ is $d_i$ and $S_i^B$ is
 the number of her neighbors playing $B$, then the payoff to $i$ from
choosing $A$ is $q(d_i-S_i^B)$ while the payoff from choosing $B$ is
$(1-q)S^B_i$. Hence, in a best-response dynamic, $i$ should adopt $B$ if $S_i^B>qd_i$ and $A$ if
$S_i^B\leq qd_i$.
A number of qualitative insights can be derived from such
a model even at this level of simplicity \cite{klein} \cite{vega07}. Specifically,
consider a network where all nodes initially play $A$.
If a small number of nodes are forced to adopt strategy $B$ (the seed) and we apply
best-response updates to other nodes in the network, then these nodes will be
repeatedly applying the following rule: switch to $B$ if enough of
your neighbors have already adopted $B$. There can be a cascading
sequence of nodes switching to $B$ such that a network-wide
equilibrium is reached in the limit. 
In this paper, we consider the case where a node playing $B$ is forced to play $B$ forever (thus the number of players $B$ is non-decreasing) and where the seed consists of only one vertex. The graph $G$ is infinite, and we are interested in the cascade phenomenon, \ie when an infinite subset of the population will eventually adopt $B$. We will show a phase transition for this phenomenon, depending on the value of the parameter $q$ of the model.

% : a given node adopts a new technology when the proportion of her neighbors already having it exceeds the parameter $q\in (0,1)$ of the model. When this parameter $q$ is small enough, a few nodes having initially the technology can make a large proportion of the population adopt it: we say that a cascade occurs. We are interested here in the appearance of phase transitions for this phenomenon (see Morris \cite{mor} for deterministic graphs).

%2) contagion sur des graphes aleatoires

%Lelarge \cite{lel:diff} et modeles classiques pas realistes

%Watts \cite{wat02}

%def clustering \cite{ws98}

%\cite{Gleeson08:modular}

%citer netgcoop \cite{CL:netgcoop11} et arxiv \cite{arxiv12}, mais pas d'overlap

When the graph $G$ is deterministic, such phase transitions were proved by Morris \cite{mor}. The graph $G$ that we consider here will have most of the properties observed in real-world networks. One of the most striking features shared by real-world networks is the scale-free property \cite{BA99}: their degree distribution follows a power law. Random graphs with an arbitrary degree distribution \cite{bc78:degseq} cover this property. The contagion model on such graphs was studied by heuristic means by Watts \cite{wat02}, and a generalization of it was studied rigorously by Lelarge \cite{lel:diff}. Another feature of real-world networks is that they all have a high clustering coefficient (Watts and Strogatz \cite{ws98}, see also Newman \cite{New03models} for several examples).
The clustering coefficient of a graph is by definition the probability that two given nodes are connected, knowing that they have a common neighbor. Since the asymptotic clustering coefficient of random graphs with an arbitrary degree distribution is zero (locally, they look like trees), this random graph model fails to cover the clustering property of real-world networks.
Recently, the contagion on clustered random graph models was studied by heuristic means by Gleeson \cite{Gleeson08:modular}, and rigorously in \cite{CL:netgcoop11} and \cite{arxiv12} (in which a generalization of the contagion model is considered). The random graph models considered have a tunable clustering coefficient and an arbitrary degree distribution, which in particular allows the study of the clustering impact on the contagion model. However, these random graph models do not cover the following property: in real-world networks, a node often belongs to several communities. A community is a set of nodes which are densely connected internally and only sparsely connected with other nodes of the network; in the clustered random graph models mentioned above, communities are represented by cliques, and a node can only belong to at most one clique. On the contrary, and as explained in \cite{NewSW01:GCbipartite} and \cite{GuillaumeLatapy06}, the structure of many real-world networks is close to the one-mode projection of a bipartite graph, in which each node belongs (possibly) to several cliques (communities).

A classical example of the one-mode projection of a bipartite graph is the collaboration graph of movie actors. Let  $\Gamma\subset V\times E$ be a bipartite graph, \ie a graph with two types of nodes: $V$-nodes and $E$-nodes, and in which there are no edges between nodes of the same type. Each $E$-node of $\Gamma$ represents a 'community' (a movie), and the $V$-nodes linked to a common $E$-node are members of the same community (actors of the same movie). The one-mode projection of $\Gamma$ on $V$-nodes (actors) is a unipartite graph: the nodes are the $V$-nodes of $\Gamma$, and there is an edge between two $V$-nodes if they belong to at least one common community (if these actors played together in at least one movie). One can construct a random graph by considering the one-mode projection of a random bipartite graph. Up to our knowledge, neither rigorous proofs nor heuristics have been done for the contagion on such a model. There are several random bipartite graph models, and, even the literature on the classical SIR epidemic on the one-mode projection of such models is incomplete. When the random bipartite graph has arbitrary degree distribution for each one of both types of nodes, heuristics have been derived by Newman \cite{new03} for the classical SIR epidemic on the one-mode projection of it. Rigorous results (for the classical SIR epidemic) were obtained by Britton \textit{et al.} \cite{britton-2007} and by Bollob\'{a}s \textit{et al.} \cite{BJR11:sparseRGwithC}, but the random graphs considered are such that their asymptotic degree distributions are respectively Poisson and mixed Poisson. 
Recently, Hackett \textit{et al.} \cite{HMGleeson:New} studied by heuristic means the contagion model on random graphs with overlapping communities, \ie with nodes that can belong to several cliques. They also derive results about the clustering effect on the contagion spread (for their random graph model). However, the communities in that random graph model are only of size three (that model does not come from the one-mode projection of a random bipartite graph).

Our random graph model is inspired from the one-mode projection of a random bipartite graph with arbitrary degree distributions. More precisely, we consider the one-mode projection of an alternating branching process that approximates locally this random bipartite graph (see \cite[Section 7.2]{DN08} for this approximation), and study rigorously the contagion on this random graph model.
% %The last random graph model is well appropriate for modeling real-world networks (see \cite[IV]{NewSW01:GCbipartite}, or \cite{GuillaumeLatapy06}). On the one hand, the strong dependence between edges in such graphs make them difficult to study rigorously. On the other hand, 
% Random bipartite graphs with arbitrary degree distributions converge locally to random trees \cite[Section 7.2]{DN08}, so that heuristics can be obtained by considering the equivalent alternating branching process. 
% %This approximation allows Newman \cite{new03} to study heuristically the classical SIR epidemic on the one-mode projection of bipartite graphs with arbitrary degree distributions as mentioned above. 
% In this paper, we consider the one-mode projection of an alternating branching process, and study rigorously the contagion on this random graph model.
Our goal is twofold: \textit{(i)} we study \textit{rigorously} the contagion on random graphs with overlapping communities; \textit{(ii)} our study provides heuristics for the contagion on the one-mode projection of random bipartite graphs with arbitrary degree distributions.

This paper is organized as follows. In Section~\ref{sec:modelBP}, we define our random graph model and recall its degree distribution and clustering coefficient. 
In Section~\ref{sec:contagionBP}, we state our theorem about the phase transition for the contagion spread on our random graph model with overlapping communities. In Section~\ref{sec:comput_M}, we compute the entries of the matrix involved in the phase transition. Finally, in Section~\ref{sec:proof_main_theorem}, we proved this phase transition, applying Sevastyanov's theorem about the phase transition of multi-type Galton-Watson branching processes.

\section{Random graph model and its basic properties} \label{sec:modelBP}

In this paper, we consider the one-mode projection of an alternating branching process. We define this branching process in Section~\ref{subs:def_BP}, and our random graph model in Section~\ref{subs:modelBP}. The degree distribution and clustering coefficient of our random graph model are the same as those computed in \cite[IV]{NewSW01:GCbipartite}, and are recalled in Section~\ref{subs:deg_clust}. 

Let $\boldp={(p_d)}_d$ and $\boldq={(q_w)}_w$ be two probability distributions with positive finite means $\lambda:=\sum_d d p_d\in(0,\infty)$ and $\mu:=\sum_w q_w \in(0,\infty)$. 
%In the following, we consider random variables $D\sim(p_d)_d$ and $W\sim(q_w)_w$.

\subsection{Alternating branching process} \label{subs:def_BP}

Our random graph model is constructed from the following alternating branching process $\BP$, whose definition is given in Section~\ref{def_BP} and phase transition (for $\BP$ to be finite/infinite) recalled in Section~\ref{ph_tr_BP}.

%The phase transition for the branching process $\BP$ to be infinite is recalled at the end of this subsection.

\subsubsection{Definition} \label{def_BP}

The branching process $\BP$ is an alternating one: each node is either of type $V$ or $E$, and a generation of $V$-nodes gives birth to a generation of $E$-nodes, and conversely. 

Let $\tD$ and $\tW$ be independent random variables with the following distributions (for any $d\geq 1$, $w\geq 1$):
\begin{eqnarray*}
\Pb(\tD=d-1)=\frac{dp_{d}}{\lambda}, \quad
\Pb(\tW=w-1)=\frac{wq_{w}}{\mu}.
\end{eqnarray*}
The variable $\tD$ (resp. $\tW$) represents the offspring number of a $V$-node (resp. $E$-node), except for the root (whose offspring distribution is $\boldp$).  %{\todo EXPLIQUER DAVANTAGE ?}
The reasons why we choose this particular distribution are the following:
\begin{itemize}
\item This distribution gives a unimodular tree (the distribution is invariant by rerooting);
\item The branching process $\BP$ is a local approximation for the random bipartite graph $\mathcal{B}=\BG$ with arbitrary degree distributions $\boldp$ and $\boldq$ \cite[Section 7.2]{DN08}: informally, the root of the branching process $\Gamma$ represents a 'typical' vertex in $\mathcal{B}$. 
Hence our random graph model $G$ is a local approximation for the one-mode projection $\GBG$ of $\mathcal{B}$, and the root of $G$ represents a 'typical' vertex in $\GBG$. 
\end{itemize}

We can define formally the branching process $\Gamma=\BP$ as follows (in the following, only the definition of the $i$-th $V$ and $E$-generations are required). The root has type $V$ and offspring distribution $\boldp$: its number $D_0$ of children satisfies $\Pb(D_0=d)=p_d$. Let $(\tW_j^{(i)})_{i,j\geq 1}$ (resp. $(\tD_k^{(i)})_{i,k\geq 1}$) be random variables distributed as $\tW$ (resp. $\tD$), all variables being independent and independent from $D_0$. Then each child $j$ ($1\leq j\leq D_0$) of the root is an $E$-node that gives birth to $\tW^{(1)}_j$ $V$-nodes, so that the root has $\xi_V^{(1)}=\sum_{j=1}^{D_0} \tW^{(1)}_j$ grandchildren. Each such node $k$, $1\leq k\leq \xi_V^{(1)}$, is a $V$-node that gives birth to $\tD^{(1)}_k$ $E$-nodes. Set $\xi_E^{(1)}=\sum_{k=1}^{\xi_V^{(1)}} \tD^{(1)}_k$. Notations are summarized in Figure \ref{fig:NotationsBP}.

\begin{figure}
\centering
\scalebox{0.3}{\input{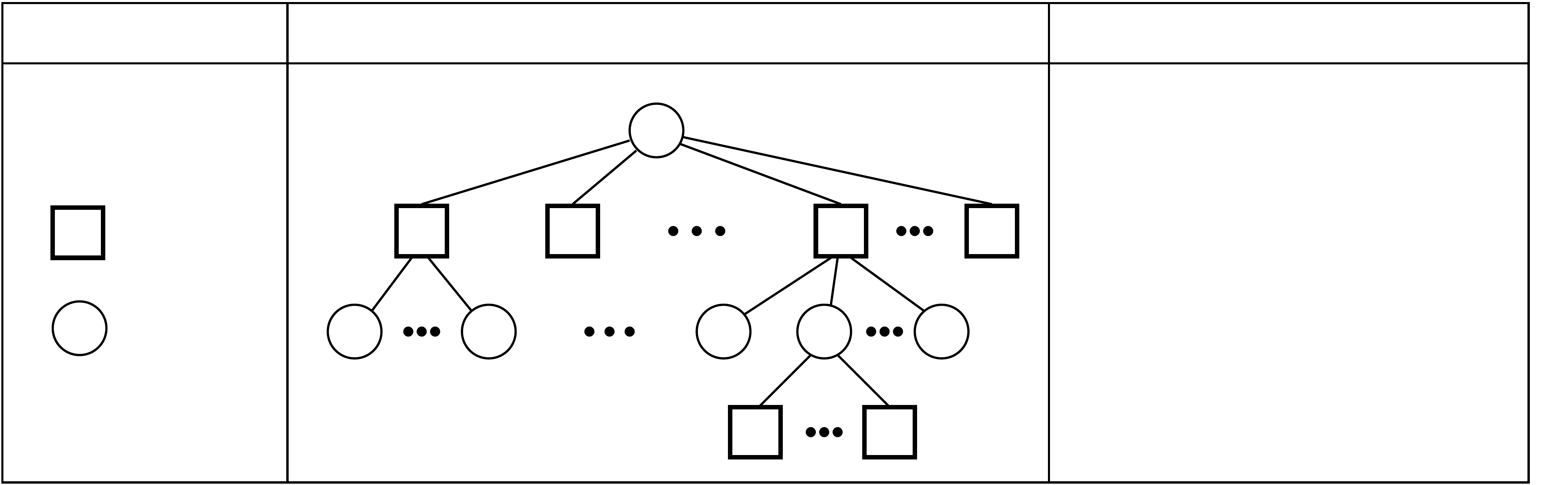tex}}
\caption{Notations for the alternating branching process $\BP$.}
\label{fig:NotationsBP}
\end{figure}

The root corresponds to $V$-generation numbered $0$, its children to $E$-generation $0$, and so on (until now, we constructed $V$ and $E$-generations $0$ and $1$). Assume generations $0$ to $i-1$ are constructed ($i\geq 2$), with $\xi_E^{(i-1)}$ for the number of nodes in the $(i-1)$-th $E$-generation. Then each node $j$, $1\leq j\leq \xi_E^{(i-1)}$, gives birth to $\tW_j^{(i)}$ $V$-nodes, and we set $\xi_V^{(i)}=\sum_{j=1}^{\xi_E^{(i-1)}} \tW^{(i)}_j$ for the total number of nodes in the $i$-th $V$-generation. Each such node $k$, $1\leq k\leq \xi_V^{(i)}$, gives birth to $\tD_k^{(i)}$ $E$-nodes, and we set $\xi_E^{(i)}=\sum_{j=1}^{\xi_V^{(i)}} \tD^{(i)}_j$ for the total number of nodes in the $i$-th $E$-generation.

\subsubsection{Extinction vs. survival} \label{ph_tr_BP}

% Theorem \ref{th2:BP} cannot be applied directly to the alternating branching process ${\Gamma=\BP}$. However, it is easy to construct a (non-alternating) branching process $\Gamma_V$ that is finite if and only if $\Gamma$ is: the $i$-th generation of $\Gamma_V$ is taken to be the $i$-th $V$-generation of $\Gamma$. The offspring distribution in $\Gamma_V$ is (except for the root, for which ${dp_{d}}/{\lambda}$ is replaced by $p_d$): 
% \begin{eqnarray*} 
% \Pb(\xi=k)= \sum_{d\geq 1} \frac{dp_{d}}{\lambda}  \sum_{(w_1-1)+\dots+(w_{d-1}-1)=k} \frac{\prod_{j=1}^{d-1} w_j q_{w_j}}{\mu^{d-1}}
% \end{eqnarray*}
% It is easy to check that the mean number of offspring in $\Gamma_V$ is given by:
% \begin{eqnarray*} 
% \Eb[\xi]= \sum_w (w-1)\frac{wq_w}{\mu} \sum_d (d-1)\frac{dp_d}{\lambda} .
% \end{eqnarray*}
% Let $D$ and $W$ be two random variables such that $D\sim(p_d)_d$ and $W\sim(q_w)_w$. Hence the previous equation can be written as
% \begin{eqnarray} \label{eq:mean_nb_BP}
% \Eb[\xi]= \frac{\Eb[W(W-1)]\Eb[D(D-1)]}{\Eb[W]\Eb[D]}.
% \end{eqnarray}

Let $D$ (resp. $W$) be a random variable with distribution $\boldp$ (resp. $\boldq$). 
We define the following generating functions, for ${x\in[0,1]}$:
\begin{eqnarray*}
 F(x) = \sum_{d\geq 0} p_d x^d \:, \;\;
 G(x) = \sum_{w\geq 1} \frac{wq_w}{\mu}x^{w-1} \:, \;\;
 H(x) = \sum_{d\geq 1} \frac{dp_d}{\lambda}x^{d-1} 
\end{eqnarray*}

The phase transition for $\BP$ to be finite/infinite is given by the next proposition, which is a direct consequence of Theorem 1 in \cite[Section I.A.5]{atne} (heuristics can also be found in \cite[IV.A]{NewSW01:GCbipartite}):
\begin{proposition} \label{prop:infiniteBP}
 Let $\pext$ be the probability that the branching process $\BP$ is finite.
\begin{itemize}
\item If $\Eb[W(W-1)]\Eb[D(D-1)]\leq \Eb[W]\Eb[D]$, then $\pext=1$;
\item If $\Eb[W(W-1)]\Eb[D(D-1)]>\Eb[W]\Eb[D]$, then $\pext<1$. More precisely, we have:
 $$\pext=(F\circ G)\left(\eta\right),$$ where $\eta:= \inf \left\{x\in [0,1]:(H \circ G)(x)=x \right\}$.
 \end{itemize}
\end{proposition}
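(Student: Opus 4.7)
The plan is to reduce the alternating two-type branching process $\BP$ to a single-type Galton-Watson tree by looking only at the $V$-generations: the children of a $V$-node in the reduced process are defined to be its $V$-grandchildren in $\Gamma$. From the construction in Section~\ref{def_BP}, the root has $\xi_V^{(1)}=\sum_{j=1}^{D_0}\tW_j^{(1)}$ such descendants, so its offspring generating function is $F\circ G$; every non-root $V$-node has $\tD$ $E$-children each producing $\tW$ $V$-children independently, yielding offspring generating function $H\circ G$. All these subtrees are mutually independent with identical law, so this is a genuine Galton-Watson process with a (possibly atypical) root offspring distribution.

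Because $\xi_V^{(i)}=0$ forces $\xi_E^{(i)}=0$ and hence $\xi_V^{(i+1)}=0$, the event that $\Gamma$ is finite coincides with the event that the reduced single-type process goes extinct. Applying the classical extinction theorem (Theorem~1 in \cite[Section~I.A.5]{atne}) to the offspring law with generating function $H\circ G$, the extinction probability of any non-root subtree is the smallest fixed point $\eta\in[0,1]$ of $(H\circ G)(x)=x$; conditioning on the offspring of the root then gives $\pext=(F\circ G)(\eta)$.

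The subcritical/critical regime for $H\circ G$ is characterized by the classical criterion $(H\circ G)'(1)=H'(1)G'(1)\leq 1$. A direct differentiation yields $H'(1)=\Eb[D(D-1)]/\lambda$ and $G'(1)=\Eb[W(W-1)]/\mu$ with $\lambda=\Eb[D]$ and $\mu=\Eb[W]$, so this threshold is exactly $\Eb[D(D-1)]\Eb[W(W-1)]\leq\Eb[D]\Eb[W]$; in this regime $\eta=1$ and hence $\pext=1$. In the supercritical case, the standard convexity/monotonicity argument for the fixed-point equation $H(G(x))=x$ on $[0,1]$ yields $\eta<1$, and strict monotonicity of $F\circ G$ (assuming $p_0<1$, the contrary case being trivial) gives $\pext=(F\circ G)(\eta)<1$.

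The only delicate step is the reduction itself, which I justify via the monotonicity observation above; everything else is a direct transcription of the single-type Galton-Watson dichotomy applied to the composed generating function $H\circ G$, together with the elementary computation of $H'(1)G'(1)$ in terms of the factorial moments of $D$ and $W$.
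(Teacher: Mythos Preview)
Your argument is correct and is exactly the reduction the paper has in mind: the paper does not spell out a proof but simply states that the proposition ``is a direct consequence of Theorem~1 in \cite[Section~I.A.5]{atne}'', and your collapsing of the alternating process onto its $V$-generations (with offspring generating function $H\circ G$ for non-root nodes and $F\circ G$ for the root) is precisely what makes that citation apply. One minor sharpening: in the supercritical case the strict monotonicity of $F\circ G$ follows automatically, since $\Eb[W(W-1)]\Eb[D(D-1)]>\Eb[W]\Eb[D]>0$ forces $p_d>0$ for some $d\geq 2$ and $q_w>0$ for some $w\geq 2$, so no separate assumption like $p_0<1$ is needed.
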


\subsection{Random graph model} \label{subs:modelBP}

% Let $\boldp={(p_d)}_d$ and $\boldq={(q_w)}_w$ be two probability distributions with positive finite means $\lambda:=\sum_d d p_d\in(0,\infty)$ and $\mu:=\sum_w q_w \in(0,\infty)$. In all this chapter, we consider random variables $D\sim(p_d)_d$ and $W\sim(q_w)_w$.

We construct the (rooted and unipartite) random graph $G=\GBP$ as follows. The root of $G$ is the root of $\Gamma=\Gamma(\boldp,\boldq)$, and
%, the $V$-nodes that are connected to a common $E$-node in $\Gamma=\Gamma(\boldp,\boldq)$ are connected into a clique. More precisely, 
the parent and the children of each $E$-node $e$ in $\Gamma$ are connected into a clique before $e$ is removed, as illustrated in Figure \ref{fig:Brothers}. 
In other words, the random graph $\GBP$ is the one-mode projection of $\Gamma$. 

%We now define $\GBP$, the one-mode projection of $\Gamma$ on $V$-nodes. 
% A classical example of one-mode projection graphs is the collaboration graph of movie actors. Each $E$-node of $\Gamma$ represents a 'community' (a movie), and the $V$-nodes linked to a common $E$-node are members of the same community (actors of the same movie). The one-mode projection of $\Gamma$ on $V$-nodes (actors) is a unipartite graph: the nodes are the $V$-nodes of $\Gamma$, and there is an edge between two $V$-nodes if they belong to at least one common community (if these actors played in at least one same movie).
% %We use the alternating branching process $\Gamma=\Gamma(\boldp,\boldq)$ defined in Section \ref{subs:def_BP} to construct a (possibly infinite) unipartite random graph called $\GBP$. 
% %As in Section \ref{subs:def_BP}, we denote vertices by $V$-nodes, and hyper-edges by $E$-nodes. 
% More formally, 
% the $V$-nodes that are connected to a common $E$-node in $\Gamma$ are connected into a clique in $\GBP$, \ie the parent and the children of each $E$-node $e$ in $\Gamma$ are connected into a clique before $e$ is removed, as illustrated in Figure \ref{fig:Brothers}. 
%{\todo EXPLIQUER DAVANTAGE ?}

\begin{figure}
\centering
\scalebox{0.33}{\input{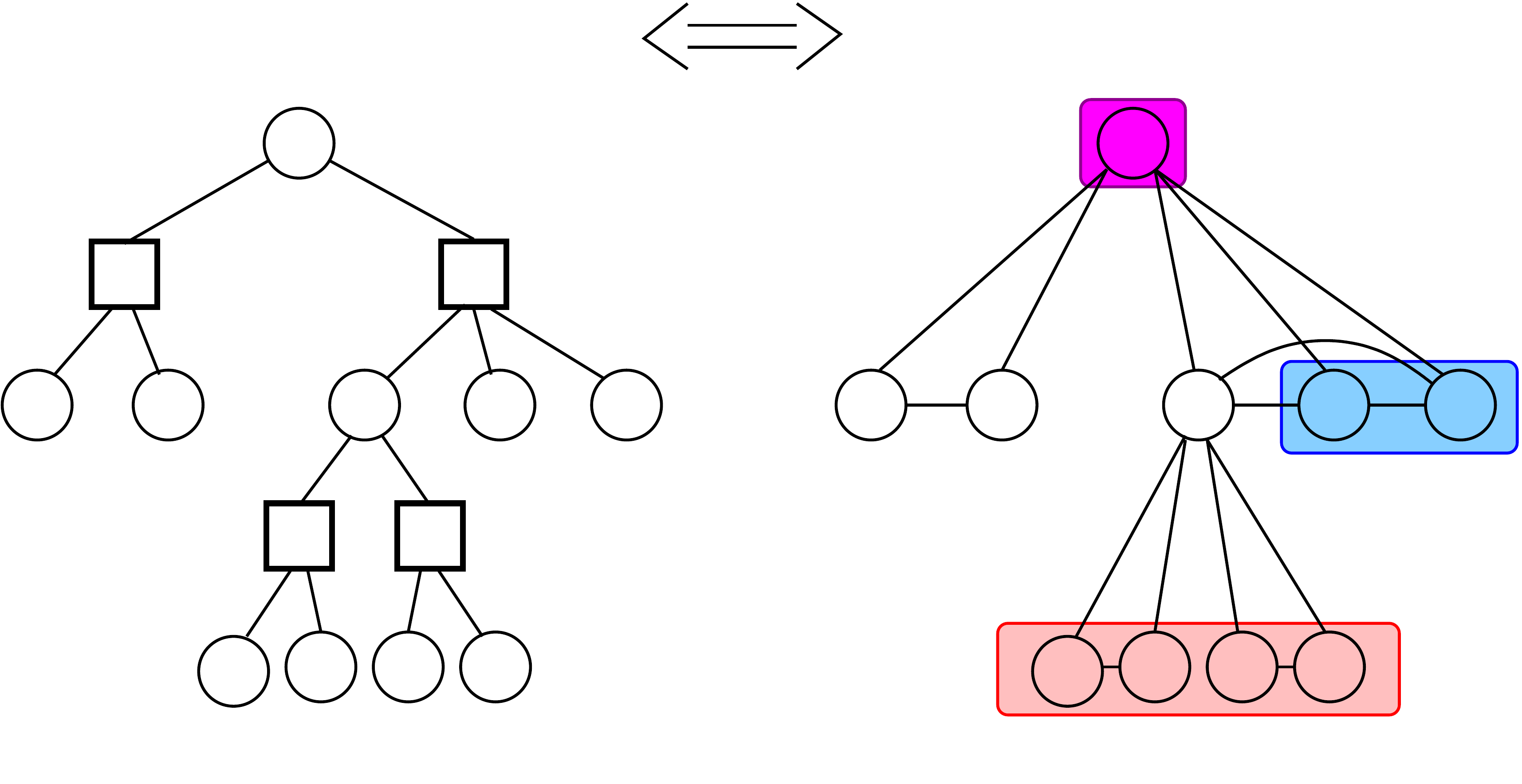_t}}
\caption{Random graph model $\GBP$, constructed from the branching process $\BP$.}
\label{fig:Brothers}
\end{figure}

Even if the graph $G$ is not a tree, its particular construction allows using the same terminology as for a tree (see Figure  \ref{fig:Brothers}). More precisely, let $v$ be a node in $G$, and let $d$ its distance from the root. By construction, there is exactly one node $u$, among the neighbors of $v$, which is at distance $d-1$ from the root: it is called the \textit{parent} of $v$. In addition, the neighbors of $v$ that are at distance $d$ from the root are called the \textit{brothers} of $v$, and those at distance $d+1$ the \textit{children} of $v$. 
If we consider a given clique of $G$, the \textit{parent of the clique} is the node whose distance from the root is minimal.
% In addition, we define the \textit{brothers} of a node $v$ in $G$ as follows. We call $v_{\Gamma}$ (or simply $v$ when no confusion is possible) the $V$-node in $\Gamma$ corresponding to $v$. Let $e$ be the parent of $v_{\Gamma}$ in $\Gamma$ ($e$ is an $E$-node). We define the brothers of $v_{\Gamma}$ as the children of $e$ different from $v_{\Gamma}$, and the brothers of $v$ (in $G$) correspond to the brothers of $v_{\Gamma}$ in $\Gamma$. In the graph $G$, the neighbors of a node are thus its parent, its children, and its brothers. 

Note that the random graph $G$ is infinite if and only if $\Gamma$ is, so that the phase transition for $G$ to be finite/infinite is given by Proposition \ref{prop:infiniteBP} above.

%In particular, asymptotic local parameters for a vertex chosen uniformly at random in $\GBG$ are given by the corresponding parameters of the root of $G$. 

\subsection{Degree distribution and clustering coefficient} \label{subs:deg_clust}

% RK: Si on choisit un sommet uniformement au hasard, il faut qu'on soit sur l'arbre fini. Alors, si le branching process est infini quand $N\to\infty$, on peut calculer le degre asymptotique de ce noeud choisi uniformement au hasard, et on obtient le meme resultat que ce qui suit (l'hyper-arete $0$ est alors le parent).

%Since the random graph $G=\GBP$ can be seen as a local asymptotic approximation for $\GBG$, asymptotic local parameters for a vertex chosen uniformly at random in $\GBG$ are given by the corresponding parameters of the root in $G$ (the root of $G$ represents a 'typical' vertex in $\GBG$). 
%Hence we focus on the degree distribution and local clustering coefficient of the root of $G$. 

The next proposition can be found in the paper of Newman \cite{new03}.

%As already mentioned in Section \ref{subs:def_BP}, we have the following number of grandchildren for the root in $\Gamma$, or alternatively the number of children for the root in $\GBP$.
\begin{proposition}[\cite{new03}] \label{prop:deg_root_BP}
We consider the random graph $\GBP$. Then the degree distribution $D'_0$ of the root is given by:
 \begin{eqnarray*} \label{eq:deg_root_BP}
 \Pb(D'_0=k) &=& \sum_{d=1}^{\infty} p_d \sum_{w_1+\dots+w_{d}=k+d} \frac{\prod_{j=1}^{d} w_j q_{w_j}}{\mu^d}.
\end{eqnarray*}

%Let $D$ (resp. $W$) be a random variable with distribution $\boldp$ (resp. $\boldq$). 
Let $T_0$ be the number of triangles the root belongs to, and let $P_0$ be the number of connected triples the root belongs to. 
Then the local clustering coefficient $C$ of the root is given by:
\begin{eqnarray*} \label{eq:clust_BP}
C :=\frac{\Eb\left(T_0\right)}{\Eb\left(P_0 \right)} 
= \frac{\frac{1}{\Eb[W]} \Eb[W(W-1)(W-2)]}{\frac{\Eb[D(D-1)]}{\Eb[D]}  \left( \frac{\Eb[W(W-1)]}{\Eb[W]} \right)^2 + \frac{1}{\Eb[W]} \Eb[W(W-1)(W-2)]},
\end{eqnarray*}
where $D$ (resp. $W$) is a random variable with distribution $\boldp$ (resp. $\boldq$).
\end{proposition}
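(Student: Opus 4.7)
For the degree distribution, I would start by observing that in the one-mode projection the neighbors of the root are precisely the $V$-nodes sharing an $E$-node child with it, i.e.\ the brothers of the root across its $D_0$ cliques. Since distinct $E$-children of the root have disjoint families of $V$-children in the branching process, these brothers are counted without repetition, so
$$D'_0 \;=\; \sum_{j=1}^{D_0} \tW_j^{(1)}.$$
Conditioning on $D_0=d$ and writing each $\tW_j^{(1)}=w_j-1$ (using $\Pb(\tW=w-1)=wq_w/\mu$) converts the event $\{D'_0=k\}$ into $\{\sum_j w_j=k+d\}$, which immediately yields the displayed expression for $\Pb(D'_0=k)$.

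For the clustering coefficient, the key structural step is to show that any triangle through the root is contained in a single clique. Indeed, a triangle at the root uses two neighbors $u,v$, each having an $E$-node parent that is a child of the root. If these $E$-parents were distinct, the only way for $u$ and $v$ to share an $E$-node in $\Gamma$ would create a cycle, contradicting the tree structure of $\Gamma$. Hence the two $E$-parents coincide and the triangle lies inside that clique, so
$$T_0 \;=\; \sum_{j=1}^{D_0} \binom{\tW_j^{(1)}}{2}, \qquad P_0 \;=\; \binom{D'_0}{2}.$$

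It then remains to take expectations. By Wald's identity,
$$\Eb[T_0] \;=\; \tfrac{1}{2}\,\Eb[D_0]\,\Eb[\tW(\tW-1)],$$
and the standard second-moment identity for compound sums gives
$$\Eb[D'_0(D'_0-1)] \;=\; \Eb[D_0]\,\Eb[\tW(\tW-1)] \,+\, \Eb[D_0(D_0-1)]\,(\Eb[\tW])^2.$$
Plugging in the size-biased moments
$$\Eb[\tW] \;=\; \frac{\Eb[W(W-1)]}{\mu}, \qquad \Eb[\tW(\tW-1)] \;=\; \frac{\Eb[W(W-1)(W-2)]}{\mu},$$
and using $\lambda=\Eb[D]$, $\mu=\Eb[W]$, the ratio $\Eb[T_0]/\Eb[P_0]$ simplifies to the stated expression for $C$.

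The only non-routine step is the structural observation that rules out cross-clique triangles at the root; every remaining piece is bookkeeping on compound-sum moments and size-biased distributions. The claim that the sum over $d$ in the degree formula can start at $d=1$ is a mild convention (for $k\geq 1$ the $d=0$ term is zero), so no additional care is required.
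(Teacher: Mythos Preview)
Your argument is correct. The paper itself does not supply a proof of this proposition; it simply attributes the result to Newman \cite{new03} and moves on, so there is no in-paper argument to compare against.

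Your derivation is exactly the natural one: write $D'_0=\sum_{j=1}^{D_0}\tW_j^{(1)}$, condition on $D_0$, and use the size-biased law of $\tW$ to obtain the degree formula; then use the tree structure of $\Gamma$ to rule out cross-clique triangles at the root, giving $T_0=\sum_j\binom{\tW_j^{(1)}}{2}$ and $P_0=\binom{D'_0}{2}$, and finish with Wald's identity and the compound-sum second factorial moment. All the moment identities you quote are correct, in particular
\[
\Eb[\tW]=\frac{\Eb[W(W-1)]}{\Eb[W]},\qquad \Eb[\tW(\tW-1)]=\frac{\Eb[W(W-1)(W-2)]}{\Eb[W]},
\]
and the simplification to the displayed expression for $C$ goes through after dividing numerator and denominator by $\Eb[D]$. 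The only remark one might add for completeness is that ``connected triple the root belongs to'' here means a length-two path \emph{centred} at the root (equivalently, an unordered pair of distinct neighbors of the root), which is the convention that makes $P_0=\binom{D'_0}{2}$; you use this implicitly and it is consistent with the local clustering coefficient as defined in \cite{new03}.
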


\section{Phase transition for the contagion: statement of our result} \label{sec:contagionBP}

In Section~\ref{subs:epidemic_model}, we define the contagion model described in the introduction, on the random graph $G=\GBP$. In Section~\ref{subs:ph_tr}, we state our main theorem (whose proof is given in Section~\ref{sec:proof_main_theorem}).
%and study it in Sections~\ref{subs:lemma_cont_BP} to \ref{subs:Gamma_infinite} (more details are given in Section~\ref{subsubs:detail_sections}).

\subsection{Epidemic model} \label{subs:epidemic_model} \label{def_Gact}

Let $q\in(0,1)$. We consider the contagion model described in the introduction, on the random graph $G=\GBP$, with parameter $q$ and the seed consisting in the root only. For simplicity, players $B$ are called active vertices, and players $A$ inactive vertices. 
The progressive dynamics of the contagion on the graph $G$ operates as follows: the root starts out being active; all other vertices are inactive.
Time operates in discrete steps $t=1,2,3,\dots$. At a given
time $t$, any inactive vertex becomes
active if its proportion of active neighbors is strictly greater than $q$. Once active, a vertex stays active. Hence the set of active vertices increases with time, and we define $\Gact$ as the graph induced by the vertices in $G$ that are active in the limit as time tends to infinity.
%Let $\Gact$ be the graph induced by the active vertices of the graph $G$.
We say that a cascade occurs if the graph $\Gact$ is infinite. We will show a phase transition for the cascade phenomenon, \ie for the graph $\Gact$ to be finite/infinite.

\subsection{Phase transition} \label{subs:ph_tr}

In the following, we make the additional assumption that the degrees of nodes in $\BP$ are bounded: there exist $\bd,\bw\geq 1$ such that $p_d=0$ for $d> \bd$ and $q_w=0$ for $w> \bw$. In addition, we assume that $p_0=q_0=q_1=0$.

We consider the random graph $G=\GBP$ defined in Section \ref{subs:modelBP}. We define the \textit{type} of a vertex in $G$ as its number of children in $G$. For all $x_0,x\in\{0,1,\dots,\bd\bw\}$, we set $m_{x_0,x}$ for the mean number of \textbf{active} children of type $x$ of an active vertex of type $x_0$ in $G$. In other words, we consider an active vertex $u$, different from the root, with $x_0$ children (if the probability that there exists such one is zero, then we set $m_{x_0,x}=0$). Once the contagion has spread among all its children, we count the number of such children that are active \textbf{and} have exactly $x$ children. The mean of this quantity (among all possible realizations of $G$ for the children and grandchildren of $u$) is called $m_{x_0,x}$. The matrix $M={(m_{x_0,x})}_{0\leq x_0,x\leq \bd \bw}$ will be computed in Section \ref{sec:comput_M}.

We have the following phase transition for the graph $\Gact$ to be finite/infinite, that will be proved in Section~\ref{sec:proof_main_theorem}.
\begin{theorem} \label{main_theorem}
Let $\pact$ be the probability that the random graph $\Gact$ is finite, and let $\rho$ be the largest eigenvalue of $M$.
% We assume that 
% \begin{eqnarray} \label{cond:qw}
%  \textrm{there exists } w\geq 2 \textrm{ such that } w-1\leq 1/q \textrm{ and } q_w>0
% \end{eqnarray}
% (otherwise, $\pact=1$ in any case). 
Then we have:
\begin{enumerate}
 \item[(i)] if $q\geq 1/2$, then $\pact=1$;
 \item[(ii)] if $q<1/2$, then
\begin{itemize}
\item if $p_2=q_2=1$, then the random graph $\Gact$ is infinite with probability one;
\item otherwise $\pact=1$ if and only if $\rho\leq 1$.
\end{itemize}
\end{enumerate}

\end{theorem}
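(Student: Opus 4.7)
For part (i), I would run the standard cut-monotonicity argument. Let $e_t$ denote the number of edges of $G$ between the active and inactive vertex sets at time $t$; then $e_0 = D'_0$ is almost surely finite by Proposition~\ref{prop:deg_root_BP}. Whenever a vertex $v$ activates, the rule $|A_v| > q\,d(v) \geq d(v)/2$ (with $A_v$ the set of active neighbors of $v$ and $I_v$ the set of inactive ones, $|A_v|+|I_v|=d(v)$) forces $|A_v| \geq |I_v|+1$, so that $e_t$ drops by at least one with each activation. Processing activations one at a time, which is legitimate by monotonicity of the threshold dynamics, the total number of activations is bounded by $e_0$; hence $\Gact$ is almost surely finite and $\pact=1$.

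The degenerate case $p_2=q_2=1$ of (ii) is handled directly: the size-biased laws satisfy $\tD=\tW=1$ almost surely, so every non-root $V$-node and every $E$-node of $\Gamma$ has exactly one non-parent child. The resulting graph $G$ is therefore $2$-regular and acyclic, hence isomorphic to the bi-infinite path with the root as one of its vertices. Each non-root vertex activates as soon as one of its two neighbors is active, since $1>2q$, and an immediate induction starting from the root shows that every vertex eventually activates, so that $\Gact$ is almost surely infinite.

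For the generic subcase of (ii), the plan is to describe the contagion as an alternating multi-type branching process and to apply Sevastyanov's theorem, following the strategy suggested by~\cite{atne}. The key structural observation is that $G$ has a tree-of-cliques shape inherited from $\Gamma$: any two distinct cliques share at most one vertex, namely the parent of the deeper clique. Consequently, by monotonicity of the threshold rule, $\Gact$ can equivalently be obtained by a purely downward cascade: for each active vertex $v$ and each clique $C$ of which $v$ is the parent, run the local threshold cascade inside $C$ using only the fact that $v$ is active and the types of the children of $v$ in $C$ (the type of a vertex being its number of children in $G$, lying in $\{0,\dots,\bd\bw\}$), and then recurse on each activated child. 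Inside $C$ the cascade is determined only by the clique size and by the children's types, which are i.i.d.\ by the alternating branching construction of $\BP$; and across distinct cliques sharing the same parent $v$ the cascades are independent, since descendants sitting in different such cliques have disjoint neighborhoods in $G$. This lets one define a multi-type branching process $\cA$ whose individuals are the active non-root vertices of $G$, typed by their number of children, with mean matrix exactly the matrix $M$ from Section~\ref{subs:ph_tr}.

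Sevastyanov's theorem applied to $\cA$ then yields that extinction is almost sure iff $\rho\leq 1$, under its positive-regularity and non-degeneracy hypotheses. Since $\Gact$ is finite iff each active child of the root spawns a finite $\cA$-tree, and a short first-step computation shows that when $\rho>1$ the root activates at least one child with positive probability, we obtain the equivalence $\pact=1 \iff \rho\leq 1$. The main obstacle I anticipate is the rigorous verification that $\cA$ is genuinely a multi-type branching process rather than merely dominated by one: once the types along the active frontier are revealed, the active-descendant subtrees must be shown to be conditionally independent copies of the same kernel, which hinges on the absence of upward feedback in the tree-of-cliques structure and on the independence, across the $E$-children of a common $V$-parent in $\Gamma$, of the grafted subtrees. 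A secondary difficulty is to pinpoint exactly which matrices $M$ are degenerate in Sevastyanov's sense, and to check that $p_2=q_2=1$ is the only such degeneracy compatible with our standing assumptions, thereby justifying its separate treatment in the statement.
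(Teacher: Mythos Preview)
Your overall architecture matches the paper's: reduce the downward cascade to a multi-type (alternating) branching process on the tree-of-cliques, then invoke Sevastyanov's extinction criterion. Your treatment of part~(i) via the cut argument and of the degenerate case $p_2=q_2=1$ are both correct and more explicit than what the paper writes (the paper just calls~(i) ``obvious'').

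There is, however, a real gap in the generic subcase. You write that Sevastyanov's theorem applies ``under its positive-regularity and non-degeneracy hypotheses'', citing Athreya--Ney. But the process here is \emph{not} positively regular in general: as soon as $p_1>0$ the type $0$ satisfies $m_{0,x}=0$ for all $x$, so no power of $M$ is strictly positive. This is precisely why the paper cannot use the classical Perron--Frobenius version from \cite{atne} and instead invokes Sevastyanov's theorem (Harris, Chapter~2, Theorem~10.1), whose only extra hypothesis is the absence of \emph{final classes}. Your ``secondary difficulty'' is therefore not secondary at all: it is the entire technical content of the proof of the equivalence $\pact=1\iff\rho\le1$.

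The paper carries this out by showing that any final class must equal $\{1\}$, and then that $\{1\}$ is final forces $p_2=q_2=1$. The argument is a short but careful case analysis: given $x\in C$ with $x>1$ and a configuration with exactly one grandchild of type in $C$, one perturbs the configuration (replacing some $x_{k\ell}$ by a smaller or equal value, or splitting an $E$-node into several of size~$2$, etc.) to produce a configuration of positive probability with either zero or at least two grandchildren in $C$, contradicting the final-class property. This perturbation step (the paper's Lemma~\ref{lem:positive_config}) uses the explicit form of $p^{(E)}(\ell,x_1,\dots,x_\ell\mid w)$ from Lemma~\ref{lem:comput_L}. Your proposal does not contain any of this, and without it the appeal to Sevastyanov is incomplete.
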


The idea developed in Sections \ref{sec:comput_M} and \ref{sec:proof_main_theorem} is the following.
We first describe the epidemic (\ie the graph $\Gact$) by a branching process. The construction of the random graph $G$ implies independence and recursion properties that left us with the study of the contagion spread inside one clique. Yet this propagation depends on the degree of the nodes (cf. Lemma~\ref{lem:def_L} in Section~\ref{subs:lemma_cont_BP}). In other words, the number of active vertices at a given generation depends on the offspring number of these vertices. Our idea is to use a multi-type (and alternating) branching process to encode this information (cf. Section~\ref{subs:alternatingBP_cont}). The definition of this branching process requires not only the computation of the number of active vertices inside a clique, but rather the computation of the joint distribution for this number and the types of the vertices (cf. Lemma~\ref{lem:comput_L} in Section~\ref{subs:lemma_cont_BP}).

Once the epidemic has been described by a multi-type branching process, we use a phase transition theorem for multi-type branching processes. As developed in Sections \ref{subs:Sevastyanov} and \ref{subs:main_pf}, classical results about multi-type branching processes do not apply directly in our case, since the matrix $M$ defined above is not positively regular. We thus use Sevastyanov's theorem. To complete the proof of Theorem~\ref{main_theorem}, we compute the final classes (defined in Section~\ref{subs:Sevastyanov}) of our multi-type branching process.

We divided our work into two parts: although Lemmas \ref{lem:def_L} and \ref{lem:comput_L} are used in both the proof of Theorem~\ref{main_theorem} and the computation of the matrix $M$, we chose to gather the computational part of our work in the next section.

%{\todo Example ????}

% {\todo revoir:
% 
% \subsubsection{Sketch of the proof.} \label{subsubs:detail_sections}
% 
% In Section \ref{subs:lemma_cont_BP}, we give lemmas on the way the contagion process spreads in a given clique of the random graph $\GBP$. These lemmas will be used in Section \ref{subs:alternatingBP_cont}, to define an alternating and multi-type branching process $\Gamma'$ whose number of $V$-nodes in the $i$-th generation is distributed as the number of vertices at distance $i$ from the root in $\Gact$. In Section \ref{subs:Gamma_infinite}, we finally provide the phase transition for the branching process $\Gamma'$ to be infinite with positive probability, which leads to the same phase transition for the graph $\Gact$.}

\section{Computation of the matrix $M$} \label{sec:comput_M}

Before computing the entries of the matrix $M$ defined in Section~\ref{subs:ph_tr}, we state two lemmas about the number of active vertices inside a given clique. These lemmas will also be used in Section~\ref{subs:alternatingBP_cont} (to define the multi-type branching process that describes the contagion spread in $\GBP$).

\subsection{Lemmas about the number of active vertices inside a given clique} \label{subs:lemma_cont_BP}

% {\todo A METTRE AILLEURS ?
% 
% We can study independently the contagion spread in two different cliques having the same parent: with the notations of Figure  \ref{fig:Brothers}, if the parent of $v$ is active, the fact that $v$ becomes active or not is independent of the activation/non-activation of $v'$ (when $v'$ is not a brother of $v$).} 

 We consider a clique of size $w$ whose parent $u$ becomes active at time $t$. The other nodes inside the clique can be activated until time $t+w-1$. More precisely, the first nodes (different from $u$) that are possibly activated inside the clique are those with fewer children. If such activations occur at time $t+1$, then it can turn other nodes of the clique into active ones at time $t+2$, and so on.

The following lemma computes the number of active vertices inside a clique, when the contagion has spread inside the clique.
\begin{lemma} \label{lem:def_L}
We consider the contagion spread inside a clique of size $w$, whose parent $u$ is initially active. Let $L$ be the final number of active children (active vertices different from $u$) inside the clique. We denote by $\{1,\dots,w-1\}$ the set of children inside the clique, and set $X_i$ for the number of children of vertex $i$, $1\leq i\leq w-1$ (so that $X_{(i)} + w-1$ is the degree of vertex $i$). Then $L$ satisfies the following equation: 
%{\todo ERREUR, a modifier et verifier que pas de repercution dans la suite}
\begin{eqnarray}\label{eq:L}
 L=\min \Big\{i\in\{1,\dots,w-1\} \Big| \lfloor q (X_{(i)} + w-1) \rfloor+1 > i \Big. \Big\} - 1,
\end{eqnarray}
where $X_{(1)}=\min_i X_i\leq X_{(2)}\leq \dots \leq X_{(w-1)}=\max_i X_i$ is the order statistics of $(X_i)_{1\leq i\leq w-1}$, and $L=w-1$ if this set is empty (\ie if $\lfloor q (X_{(i)} + w-1) \rfloor+1\leq i$ for all $1\leq i\leq w-1$).
\end{lemma}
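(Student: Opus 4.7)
The crucial observation is that inside a clique with only the parent $u$ initially active, every inactive child sees exactly the same set of active vertices in the clique (the parent plus the currently-active siblings), so among the inactive children it is those with the smallest threshold that activate first. Writing the threshold of child $i$ as $\lfloor q(X_i+w-1)\rfloor+1$ (the smallest integer strictly greater than $q d_i$), this threshold is a non-decreasing function of $X_i$. I would therefore relabel the children so that $X_1\leq\cdots\leq X_{w-1}$ coincides with the order statistics, which is legitimate because $L$ depends only on the multiset $\{X_1,\ldots,X_{w-1}\}$.

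Next I would formalize the cascade: let $A_t$ denote the set of active children after $t$ update steps, with $A_0=\emptyset$. Child $i$ activates at step $t+1$ iff $1+|A_t|>q(X_i+w-1)$, equivalently $|A_t|\geq\lfloor q(X_i+w-1)\rfloor$. Using the sorted order and inducting on $t$, I would prove the invariant $A_t=\{1,2,\ldots,j_t\}$ for some non-decreasing $j_t\in\{0,1,\ldots,w-1\}$; this reduces the whole dynamics to a scalar monotone iteration, which stabilizes at some $L=j_\infty$.

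To identify $L$ with the right-hand side of~\eqref{eq:L}, I would set $i^{\star}:=\min\{i\in\{1,\ldots,w-1\}:\lfloor q(X_{(i)}+w-1)\rfloor+1>i\}$ (and $i^{\star}:=w$ if this set is empty) and verify both directions. For the lower bound $L\geq i^{\star}-1$, the inequality $\lfloor q(X_{(i)}+w-1)\rfloor\leq i-1$ valid for all $i<i^{\star}$ yields by induction that each of $(1),\ldots,(i^{\star}-1)$ eventually enters $A_t$: once $(1),\ldots,(i-1)$ are active, the $i$-th child sees $i-1\geq\lfloor q(X_{(i)}+w-1)\rfloor$ active siblings and so activates. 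For the upper bound, the sorted-order invariant forbids any child $(i)$ with $i\geq i^{\star}$ from activating, since at the moment it would activate $A_t$ would contain only indices strictly less than $i^{\star}$, giving $|A_t|\leq i^{\star}-1<i^{\star}\leq\lfloor q(X_{(i^{\star})}+w-1)\rfloor\leq\lfloor q(X_{(i)}+w-1)\rfloor$. Combining these, $L=i^{\star}-1$, which is exactly~\eqref{eq:L}, with the empty-set case giving $L=w-1$ as stated. The only delicate point in the whole argument is bookkeeping around the strict inequality in the contagion rule and the integer rounding --- keeping the $+1$'s and the boundary cases straight --- but the genuinely structural content is the sorted-order invariant, after which the rest is elementary.
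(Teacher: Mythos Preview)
Your proof is correct and follows essentially the same approach as the paper's: both define the activation threshold $A_i=\lfloor q(X_i+w-1)\rfloor+1$, sort the children by $X_i$, and then argue inductively that child $(i)$ activates if and only if $A_{(i)}\leq i$, so the cascade stops at the first index where this fails. Your write-up is more explicit --- you introduce the active set $A_t$, prove the initial-segment invariant $A_t=\{1,\ldots,j_t\}$, and check both inequalities $L\geq i^\star-1$ and $L\leq i^\star-1$ separately --- whereas the paper simply sketches the base case and says ``the lemma follows by a simple induction''; but the underlying argument is identical.
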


\begin{proof}
By definition, a node $i$, $1\leq i\leq w-1$, becomes active if and only if the proportion of its active neighbors is strictly greater than $q$, \ie if and only if its number of active neighbors is at least 
\begin{eqnarray} \label{eq:active_least}
 A_i:=\lfloor q \cdot  (X_i + w-1) \rfloor +1.
\end{eqnarray}

We use the order statistics of $(A_i)_{1\leq i\leq w-1}$ (or equivalently the one of $(X_i)_{1\leq i\leq w-1}$): nodes with fewer children need fewer active neighbors to become active, and the first node(s) to become possibly active is (are) the one(s) with $X_{(1)}$ children. More precisely, if $A_{(1)}>1$, no node different from $u$ inside the clique can be activated, and $L=0$. If $A_{(1)}\leq 1$, then at least one node (different from $u$) is activated. Then a second one is also activated if and only if $A_{(2)}\leq 2$, and the lemma follows by a simple induction.

%Note that $L<1/q$. Indeed $A_{(1)}\geq \lfloor q \cdot  ( w-1) \rfloor +1$, so that $A_{(1)}\leq 1$ implies $w-1<1/q$.
\end{proof}

The next lemma provides the joint distribution of $L$ and the order statistics ${\left(X_{(i)}\right)}_{1\leq i\leq L}$, given the size $w$ of the clique.
The random variables $(X_i)_{1\leq i\leq w-1}$ are \iid and distributed as the following random variable $X$, which represents the number of children of a vertex different from the root in $\GBP$ (this equation is similar to the one of Proposition~\ref{prop:deg_root_BP}):
\begin{eqnarray} \label{eq:gdchildren}
\Pb(X=k)= \sum_{d=1}^{\bd} \frac{d p_{d}}{\lambda}  \sum_{(w_1-1)+\dots+(w_{d-1}-1)=k} \frac{\prod_{j=1}^{d-1} w_j q_{w_j}}{\mu^{d-1}}.
\end{eqnarray}
For any sequence $ x_1\leq x_2\leq \dots\leq x_\ell $ and any $i\in\{1,\dots,\ell-1\}$, we set (omitting the dependency on $(x_1,\dots,x_{\ell})$):
\begin{eqnarray} \label{eq:sk}
 s_i:= \begin{cases}  \max\{j\geq 0 | x_i=x_{i+j}\}+1&\mbox{if } x_{i-1}< x_i \mbox{ or } i=1, \\
1 & \mbox{if } x_{i-1}= x_i, \end{cases}
\end{eqnarray}
%In particular, if $x_i=x_{i-1}$, then $s_i=1$. Otherwise, 
When $x_{i-1}< x_i$ (or $i=1$), $s_i$ is the number of $i'\geq i$ such that $x_{i'}=x_i$. For instance, if $\ell=6$ and $x_1<x_2=x_3=x_4<x_5=x_6$, then $s_1=1,s_2=3,s_3=s_4=1$ and $s_5=2$, so that $\prod_{i=1}^{5} s_i !=3 ! \, 2!$. 

\begin{lemma} \label{lem:comput_L}
We consider a clique of size $w$ whose parent $u$ is initially active, and use the same notations as for Lemma \ref{lem:def_L}. Then we have, for any $1\leq \ell\leq w-1$ and $ 0\leq x_1\leq x_2\leq \dots\leq x_\ell$:
 \begin{eqnarray*}\label{eq:E-offspring}
 p^{(E)}(\ell,x_1,\dots,x_\ell|w) &:= & \Pb(L=\ell, X_{(1)}=x_1,\dots,X_{(\ell)}=x_\ell) \nonumber \\
 &= &\left(\prod_{i=1}^{\ell}\Indb_{\{\lfloor q (x_i + w-1) \rfloor+1\leq i\}}\right) \frac{(w-1)! }{(w-1-\ell)! \:\prod_{i=1}^{\ell-1}s_i !}  \nonumber \\
&& \cdot \left(\prod_{i=1}^\ell \Pb(X=x_i) \right) \cdot \Big(\Pb\left(\lfloor q (X + w-1) \rfloor> \ell\right)\Big)^{w-1-\ell} , \\ 
%\end{eqnarray}
%where the distribution of $X$ is given by equation \eqref{eq:gdchildren}, and the definition of $s_i$ by \eqref{eq:sk}.
%In addition, we have that:
%\begin{eqnarray}\label{eq:E-offspring-0}
p^{(E)}(0|w)&:=&\Pb(L=0) \\
& =& \Big( \Pb(\lfloor q (X + w-1) \rfloor>0) \Big)^{w-1}.
\end{eqnarray*}
\end{lemma}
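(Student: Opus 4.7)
My plan is to pin down the event $\{L=\ell,\, X_{(1)}=x_1,\dots,X_{(\ell)}=x_\ell\}$ through Lemma~\ref{lem:def_L} and then exploit the fact that $X_1,\dots,X_{w-1}$ are \iid~with common distribution given by~\eqref{eq:gdchildren}. For $1\leq\ell\leq w-2$, the event $\{L=\ell\}$ is the conjunction of the ``activation'' inequalities $\lfloor q(X_{(i)}+w-1)\rfloor + 1 \leq i$ for $i=1,\dots,\ell$ and the ``stopping'' inequality $\lfloor q(X_{(\ell+1)}+w-1)\rfloor + 1 > \ell+1$. Conditioning on $X_{(i)}=x_i$ for $i\leq\ell$, the activation part becomes the deterministic factor $\prod_{i=1}^{\ell} \Indb_{\{\lfloor q(x_i+w-1)\rfloor+1\leq i\}}$, and the stopping part becomes a condition on the remaining $w-1-\ell$ variables alone.

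The key observation is that, when this indicator is non-zero, $\lfloor q(x_\ell+w-1)\rfloor \leq \ell-1 < \ell$; hence, by monotonicity of $y\mapsto \lfloor q(y+w-1)\rfloor$, any value $y$ satisfying $\lfloor q(y+w-1)\rfloor > \ell$ is automatically strictly larger than $x_\ell$. So the joint constraint ``$X_{(\ell+1)}$ is the minimum of the $w-1-\ell$ unprescribed variables and satisfies $\lfloor q(X_{(\ell+1)}+w-1)\rfloor > \ell$'' decouples cleanly into: each of those $w-1-\ell$ variables independently takes a value $y$ with $\lfloor q(y+w-1)\rfloor > \ell$, with no further ordering constraint coming from the $x_i$'s. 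By independence this contributes the factor $\Pb\bigl(\lfloor q(X+w-1)\rfloor > \ell\bigr)^{w-1-\ell}$.

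It then remains to count ordered tuples $(X_1,\dots,X_{w-1})$ whose coordinates at some $\ell$ positions realize the ordered multiset $(x_1,\dots,x_\ell)$. The $\ell$ positions can be chosen in $\binom{w-1}{\ell}$ ways, and arranging the multiset on them gives $\ell!/\prod_j m_j!$ distinct assignments, where $m_j$ denotes the multiplicity of the $j$th distinct value. A short case analysis of definition~\eqref{eq:sk} shows $\prod_j m_j! = \prod_{i=1}^{\ell-1} s_i!$: at a first occurrence of a given value, $s_i$ equals the corresponding multiplicity, otherwise $s_i=1$, and $s_\ell=1$ always. Combining the combinatorial prefactor $\frac{(w-1)!}{(w-1-\ell)!\,\prod_{i=1}^{\ell-1}s_i!}$ with $\prod_{i=1}^{\ell}\Pb(X=x_i)$ (the probability of each specific configuration of the ``small'' coordinates) and with $\Pb(\lfloor q(X+w-1)\rfloor > \ell)^{w-1-\ell}$ yields the announced formula. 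The boundary case $\ell=w-1$ has an empty ``remaining'' block (last factor equals $1$), and $\ell=0$ reduces to $\{L=0\}=\{\lfloor q(X_i+w-1)\rfloor > 0\text{ for all }i\}$, which gives the stated expression for $p^{(E)}(0\mid w)$ by independence.

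The main obstacle is the combinatorial identity $\prod_j m_j! = \prod_{i=1}^{\ell-1} s_i!$: matching the ``first-occurrence'' bookkeeping built into~\eqref{eq:sk} with the standard multiset permutation count requires carefully checking the block structure of the ordered tuple $(x_1,\dots,x_\ell)$, which is most cleanly done by grouping the indices into maximal runs of equal $x_i$'s. The rest is routine manipulation using independence and a Fubini-type regrouping.
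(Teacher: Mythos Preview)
Your proposal is correct and follows essentially the same approach as the paper. The paper invokes a cited formula for the joint law of discrete order statistics and then sums over $x_{\ell+1}\le\dots\le x_{w-1}$, using the same key observation you made (that $\lfloor q(x_\ell+w-1)\rfloor\le \ell-1$ lets one drop the constraint $x_{\ell+1}>x_\ell$) to collapse the tail sum into $\bigl(\Pb(\lfloor q(X+w-1)\rfloor>\ell)\bigr)^{w-1-\ell}$; you instead derive the multiset-permutation count $\binom{w-1}{\ell}\cdot \ell!/\prod_j m_j!$ from first principles and identify $\prod_j m_j!=\prod_{i=1}^{\ell-1}s_i!$, which is exactly the combinatorial content of the order-statistics formula the paper cites.
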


\begin{proof}
The joint distribution of the order statistics for a sequence of \iid discrete random variables ${(Y_i)}_{1\leq i\leq n}$, distributed as $Y$, is given by the following equation, for any $y_1\leq y_2\leq \dots\leq y_{n}$ (see \cite[Equation (2.3)]{OrderStat}):
\begin{eqnarray} \label{eq:dist_OS}
  \Pb(Y_{(1)}=y_1,\dots,Y_{(n)}=y_{n} ) &=&  \frac{n!}{\prod_{i=1}^{n-1} s_i !} \prod_{i=1}^{n} \Pb(Y=y_i) \\
&=: & p_Y(y_1,\dots,y_{n}),    \nonumber
\end{eqnarray}
where ${(s_i)}_i$ (defined in \eqref{eq:sk}) correspond to the sequence $(y_1, \dots, y_{n})$.

If $\lfloor q (x_i + w-1) \rfloor+1> i$ for some $1\leq i\leq \ell$, then $L<\ell$ (due to \eqref{eq:L}), so that $p(\ell,x_1,\dots,x_\ell|w)=0$. 
We assume that $\lfloor q (x_i + w-1) \rfloor+1\leq i$ for all $1\leq i\leq \ell$. Then we have, using equation \eqref{eq:L} for the random variable $X$ defined in \eqref{eq:gdchildren}: 
% \begin{eqnarray*}
%  \Pb(X_{(1)}=x_1) &=&  \Pb(X_k=x_1 \textrm{ for some } k \textrm{ and } X_{k'} \geq x_1 \textrm{ for } k'\neq k) \\
% &=& w \Pb(X=x_1) (\Pb(X\geq x_1))^{w-1} \\
%  \Pb(X_{(1)}=x_1,X_{(2)}=x_2) &=& \Pb(X_{k_j}=x_j, \textrm{ for some } k_j,\: j=1,2, \textrm{ and } X_{k'} \geq x_2 \textrm{ for } k'\neq k_1,k_2) \\
% &=& w (w-1) \Pb(X=x_1) \Pb(X=x_2) (\Pb(X\geq x_2))^{w-2} \Indb_{x_1\neq x_2} \\
% && + \frac{w (w-1)}{2} \Pb(X=x_1) \Pb(X=x_2) (\Pb(X\geq x_2))^{w-2} \Indb_{x_1= x_2}
% \end{eqnarray*}
\begin{eqnarray}
 p^{(E)}(\ell,x_1,\dots,x_\ell|w)&=&\sum_{x_\ell < x_{\ell+1}\leq \dots\leq x_{w-1}}  p_X(x_1,\dots,x_{w-1}) \; \Indb_{\left\{\lfloor q(x_{\ell+1}+w-1)\rfloor>\ell\right\}} \nonumber\\
&=&\sum_{x_{\ell+1}\leq \dots\leq x_{w-1}}  p_X(x_1,\dots,x_{w-1}) \; \Indb_{\left\{\lfloor q(x_{\ell+1}+w-1)\rfloor>\ell\right\}} \label{eq:pE}
\end{eqnarray}
The last equality comes from the fact that $\lfloor q (x_\ell + w-1) \rfloor+1\leq \ell$.
We set:
\begin{eqnarray*}
g_X(\ell,w) &:= &\sum_{x_\ell < x_{\ell+1}\leq \dots\leq x_{w-1}} \frac{(w-1-\ell)!}{\prod_{i=\ell+1}^{w-2}s_i !} \prod_{i=\ell+1}^{w-1} \Pb(X=x_i)  \; \Indb_{\left\{\lfloor q(x_{\ell+1}+w-1)\rfloor>\ell\right\}} .
\end{eqnarray*}
Then we have, using \eqref{eq:pE} and replacing $p_X(x_1,\dots,x_{w-1})$ by its expression in \eqref{eq:dist_OS}:
\begin{eqnarray*}
 p^{(E)}(\ell,x_1,\dots,x_\ell|w)&=&\left( \frac{(w-1)!}{\prod_{i=1}^{\ell-1} s_i !} \prod_{i=1}^{\ell} \Pb(X=x_i) \right) \cdot \frac{g_X(\ell,w)}{(w-1-\ell)!}
\end{eqnarray*}
We now compute $g_X(\ell,w)$.
Let $Z_{1},\dots, Z_{w-1-\ell}$ be \iid random variables distributed as $X$, and set $Z_{(1)}:=\min_{1\leq k\leq w-1-\ell} Z_k$. Then we have, using  \eqref{eq:dist_OS}:
\begin{eqnarray*}
g_X(\ell,w)&=& \sum_{ x_{\ell+1}\leq \dots\leq x_{w-1}} p_X(x_{\ell+1},\dots,x_{w-1}) \Indb_{\left\{\lfloor q(x_{\ell+1}+w-1)\rfloor>\ell\right\}}\\
&=& \Pb\left(\lfloor q\cdot(Z_{(1)}+w-1)\rfloor>\ell\right) \\
&=& \Big(\Pb\left(\lfloor q (X + w-1) \rfloor> \ell\right)\Big)^{w-1-\ell}.  
\end{eqnarray*}

This proves the first equation, and the second one follows from
% In addition, we have that $L\leq\ell$ if and only if $\lfloor q (X_{(\ell+1)} + w) \rfloor +1> \ell+1$. Hence we have that:
% \begin{eqnarray*}
%  p(\ell,x_1,\dots,x_\ell|w)
% &=& \Pb(X_{k_j}=x_j, \textrm{ for some } k_j,\: j\leq \ell, \textrm{ and } \lfloor q (X_{k'} + w) \rfloor > \ell \textrm{ for } k'\neq k_1,\dots,k_\ell) \\
% &=& \frac{w \dots (w-\ell+1)}{\prod_{k=1}^{\ell-1}s_k !} \left(\prod_{k=1}^\ell \Pb(X=x_k) \right) \cdot \Big(\Pb\left(\lfloor q (X + w) \rfloor> \ell\right)\Big)^{w-\ell}
% \end{eqnarray*}
% where $s_k$ is defined in \eqref{eq:sk}. This proves equation \eqref{eq:E-offspring}. 
% We can also compute the probability that no vertex is infected in the brotherhood, which is:
\begin{eqnarray*}
 \Pb(L=0) &=& \Pb(\lfloor q (X_{i} + w-1) \rfloor+1>1,  \textrm{ for all } i \leq w-1) \\
&=& \Big( \Pb(\lfloor q (X + w-1) \rfloor>0) \Big)^{w-1},
\end{eqnarray*}
which ends the proof.
\end{proof}

\subsection{The number of active vertices of a given type}

The next proposition gives the computation for the matrix $M$ 
%defined in Section~\ref{subs:ph_tr} and 
(whose largest eigenvalue is used in Theorem~\ref{main_theorem}).
\begin{proposition}
We define the following quantities, for $0\leq x\leq \bd\bw$, $1\leq w\leq\bw$, $1\leq k\leq w-1$, $k+\lfloor q(x+w-1)\rfloor\leq \ell\leq w-1$ and $\lfloor q(x+w-1)\rfloor\leq i\leq \ell-k$ (with $p^{(E)}(\ell,x_1,\dots,x_\ell|w)$ defined in Lemma \ref{lem:comput_L}):
\begin{eqnarray*}
 \cS_{(x,k,\ell,i)}&:=& \left\{(x_1,\dots,x_{\ell})\left|x_1\leq\dots x_{i}<x_{i+1}=\dots=x_{i+k}=x<x_{i+k+1}\leq x_{\ell} \right.\right\} \\
P_{x|w}(k,\ell,i)  &:=& \sum_{(x_1,\dots,x_{\ell})\in \cS_{(x,k,\ell,i)}} p^{(E)}(\ell,x_1,\dots,x_\ell|w) \label{eq:P_x|w} \\
m_{x|w} &:=&\sum_{1\leq k\leq w-1} k \cdot \sum_{k+\lfloor q(x+w-1)\rfloor\leq \ell\leq w-1} \left( \sum_{\lfloor q(x+w-1)\rfloor\leq i\leq \ell-k} P_{x|w}(k,\ell,i) \right) \label{eq:m_x|w}
\end{eqnarray*}

Let $v$ be an active vertex in $G=\GBP$ with type $x_0$, $0\leq x_0\leq \bd\bw$. Then the mean number $m_{x_0,x}$ of its \textit{active} children having type $x$, $0\leq x\leq \bd\bw$, is $m_{0,x}=0$ if $x_0=0$ and otherwise is given by:
\begin{eqnarray}  \label{eq:m_x_0,x}
 m_{x_0,x}=\sum_{d=1}^{\bd} \frac{dp_{d}}{\lambda}  \sum_{(w_1-1)+\dots+(w_{d-1}-1)=x_0} \frac{\prod_{j=1}^{d-1} w_j q_{w_j}}{\mu^{d-1}} \sum_{i=1}^{d-1} m_{x|w_i}.
\end{eqnarray}
% where $m_{x|w}$ (for $1\leq w\leq \bw$) is the mean number, for an $E$-node with type $w$, of its children having type $x$, and can be expressed as follows:
% \begin{eqnarray*} \label{eq:m_x|w}
%  m_{x|w} =\sum_{1\leq k\leq w-1} k \cdot \sum_{k+\lfloor q(x+w-1)\rfloor\leq \ell\leq w-1} \left( \sum_{\lfloor q(x+w-1)\rfloor\leq i\leq \ell-k} P_{x|w}(k,\ell,i) \right),
% \end{eqnarray*}
% where $P_{x|w}(k,\ell,i) $ is the probability that an $E$-node with type $w$ has exactly $\ell$ children, $i$ of which having type strictly less than $x$ and exactly $k$ of which having type $x$, and is given by:
% \begin{eqnarray*} \label{eq:P_x|w}
% P_{x|w}(k,\ell,i)  &=& \sum_{(x_1,\dots,x_{\ell})\in \cS_{(x,k,\ell,i)}} p^{(E)}(\ell,x_1,\dots,x_\ell|w),
% \end{eqnarray*}
% where $\cS_{(x,k,\ell,i)}:= \left\{(x_1,\dots,x_{\ell})\left|x_1\leq\dots x_{i}<x_{i+1}=\dots=x_{i+k}=x<x_{i+k+1}\leq x_{\ell} \right.\right\}$ and $p^{(E)}(\ell,x_1,\dots,x_\ell|w)$ is defined in Lemma \ref{lem:comput_L}.
\end{proposition}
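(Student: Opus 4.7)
My plan is to decompose $v$'s children in $G$ via the local structure of the underlying bipartite branching process $\Gamma$, reduce by linearity of expectation to a single-clique computation, and then derive $m_{x|w}$ from Lemma~\ref{lem:comput_L}.

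When $x_0 = 0$, $v$ has no children, so $m_{0,x}=0$ is trivial. Assume $x_0 \geq 1$. Since $v$ is a non-root $V$-node of $\Gamma$, its number of $E$-children is distributed as $\tD$, taking the value $d-1$ with probability $dp_d/\lambda$. Each such $E$-child induces one clique ``below'' $v$ in $G$; the clique sizes $w_1,\ldots,w_{d-1}$ are i.i.d.\ with $\Pb(w_i = w) = wq_w/\mu$, and the type of $v$ equals $\sum_i(w_i-1)$. Because $\Gamma$ is a tree, the subtrees rooted at distinct $E$-children of $v$ are mutually independent and independent of the clique through which $v$ was infected. Moreover, any child $v'$ of $v$ lying in one of the cliques below $v$ has, at the moment its own activation is decided, only inactive neighbors outside that clique: an easy induction on the tree shows that no descendant of $v'$ can be active while $v'$ is not. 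Hence, conditional on $(d; w_1, \ldots, w_{d-1})$, the contagions inside the $d-1$ cliques below $v$ are mutually independent, so by linearity of expectation their total contribution is $\sum_{i=1}^{d-1} m_{x|w_i}$. Averaging against $\tfrac{dp_d}{\lambda}\tfrac{\prod_j w_j q_{w_j}}{\mu^{d-1}}$ restricted to $\sum_i(w_i-1)=x_0$ yields \eqref{eq:m_x_0,x}.

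To compute $m_{x|w}$, I invoke Lemma~\ref{lem:def_L}: conditional on $L=\ell$ and on the order statistics $X_{(1)}\leq\cdots\leq X_{(\ell)}$ of the children's types, the active children are exactly those with the $\ell$ smallest types, so the number of active children of type $x$ equals the multiplicity $k$ of $x$ among $X_{(1)},\ldots,X_{(\ell)}$. I partition the event that $L=\ell$ and this multiplicity equals $k$ according to the position $i+1$ of the first occurrence of $x$ in the order statistics, which is precisely the family of sets $\cS_{(x,k,\ell,i)}$. A child of type $x$ has degree $x+w-1$ in $G$ and therefore requires its order-statistic position to be at least $\lfloor q(x+w-1)\rfloor + 1$ in order to become active; this forces $i \geq \lfloor q(x+w-1)\rfloor$ for the first occurrence and $i+k \leq \ell$ for the last, whence the outer range $k+\lfloor q(x+w-1)\rfloor\leq \ell\leq w-1$. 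Writing $m_{x|w}$ as $\sum_k k$ times the probability that the multiplicity equals $k$, and substituting $P_{x|w}(k,\ell,i)$ from Lemma~\ref{lem:comput_L}, gives the stated formula.

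The main work is bookkeeping: one must verify that the family $\cS_{(x,k,\ell,i)}$ really is a disjoint partition of the event ``multiplicity $=k$, $L = \ell$'' (the strict inequalities $x_i < x$ and $x < x_{i+k+1}$ become vacuous at the boundaries $i=0$ and $i+k=\ell$), that the indicators $\Indb_{\{\lfloor q(x_j+w-1)\rfloor+1\leq j\}}$ appearing in $p^{(E)}$ are automatically satisfied over the declared ranges (so that Lemma~\ref{lem:comput_L} plugs in without extra conditions), and that the independence of the contagions in distinct cliques below $v$ is genuinely a consequence of the tree structure of $\Gamma$ together with the downstream-inactivity observation above. None of these steps is deep, but they must all be checked to close the argument.
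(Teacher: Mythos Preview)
Your argument is correct and follows essentially the same route as the paper's own proof: interpret $P_{x|w}(k,\ell,i)$ as the probability that a clique of size $w$ with active parent has exactly $\ell$ active children, $i$ of type strictly below $x$ and $k$ of type exactly $x$; sum $k$ times this over all $(k,\ell,i)$ to get the per-clique mean $m_{x|w}$; use the activation threshold to see that the terms outside the stated ranges vanish; and finally appeal to independence across the $d-1$ cliques below $v$ to assemble \eqref{eq:m_x_0,x}. Your write-up is in fact more careful than the paper's on two points---the explicit downstream-inactivity justification for why the contagion in each clique below $v$ can be analyzed in isolation, and the disjointness/boundary checks for the sets $\cS_{(x,k,\ell,i)}$---but these are elaborations of the same argument, not a different one.
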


\begin{proof}

Using the definition of $p^{(E)}(\ell,x_1,\dots,x_\ell|w)$ in Lemma \ref{lem:comput_L}, one can easily see that $P_{x|w}(k,\ell,i) $ is the probability that a clique of size $w$, whose parent is initially active, has exactly $\ell$ active children, $i$ of which having type strictly less than $x$ and exactly $k$ of which having type $x$.

Hence the mean number, for a clique of size $w$, of its active children having type $x$ is given by:
\begin{eqnarray*}
N_{x|w}:=\sum_{1\leq k\leq w-1} k \cdot \sum_{0\leq \ell\leq w-1} \left( \sum_{0\leq i\leq \ell-k} P_{x|w}(k,\ell,i) \right).
\end{eqnarray*}
In addition, for the children of type $x$ to be activated inside the clique, there must be at least $\lfloor q(x+w-1)\rfloor+1$ active children of type strictly less than $x$ (due to \eqref{eq:active_least}). In other words, for $P_{x|w}(k,\ell,i) $ to be positive, one should have $i+1\geq\lfloor q(x+w-1)\rfloor+1$. Moreover, we have: $\ell \geq k+i\geq k+\lfloor q(x+w-1)\rfloor$, so that $N_{x|w}=m_{x|w}$. 

The end of the proof follows easily, since we can study independently two different cliques having the same parent: with the notations of Figure  \ref{fig:Brothers}, if the parent of $v$ is active, the fact that $v$ becomes active or not is independent of the activation/non-activation of $v'$, when $v'$ is not a brother of $v$.

\end{proof}

\section{Proof of Theorem \ref{main_theorem}} \label{sec:proof_main_theorem}

This section is organized as follows. First we define a multi-type (and alternating) branching process that completely describes the contagion spread in $G=\GBP$ (using Lemma~\ref{lem:comput_L} of Section~\ref{subs:lemma_cont_BP}).
Second we recall Sevastyanov's theorem about the phase transition for multi-type Galton-Watson branching processes \cite{Sevastyanov}. We then use this theorem to prove Theorem~\ref{main_theorem} (stated in Section~\ref{subs:ph_tr}).

\subsection{Description of the contagion model by a multi-type (and alternating) branching process} \label{subs:alternatingBP_cont}

% Comme precedemment, on va donner la loi des descendants pour un processus de branchement qui va correspondre au nombre de descendants infectes a chaque fois (lorsque la racine est infectee). Ainsi, il y aura cascade si et seulement si ce processus de branchement est infini.

We now define an alternating branching process $\Gamma'$ in which the number of $V$-nodes in the $i$-th $V$-generation is distributed as the number of vertices in the $i$-th generation of $\Gact$ (for any $i\geq 0$).
As explained in Lemma~\ref{lem:def_L}, the number $L$ of vertices at the end of the contagion spread inside a clique depends on:
\begin{itemize}
 \item the clique size $w$,
\item the number of children in $\GBP$ of each child $i$, $1\leq i\leq w-1$, inside the clique. 
\end{itemize}
These dependencies lead to considering a multi-type branching process for $\Gamma'$ (for an introduction on such processes, see for instance Harris \cite[Chapter 2]{Harris} or Mode \cite[Chapters 1 and 2]{Mode:MultitypeBP}). 

% Let us recall the following generating functions, for $x\in[0,1]$ (defined in Section~\ref{subs:def_BP}):
% \begin{eqnarray*}
%  G(x) &=& \sum_w \frac{wq_w}{\mu}x^{w-1}, \label{eq:G} \\
%  H(x) &=& \sum_d \frac{dp_d}{\lambda}x^{d-1}. \label{eq:H} 
% \end{eqnarray*}

We need the following notation. For all $1\leq d\leq \bd$ and $ 0\leq w_1, w_2, \dots, w_{d-1}\leq \bw $, we set:
\begin{eqnarray} \label{eq:V-offspring}
 p^{(V)}(d-1,w_1,\dots,w_{d-1}|x) &:=& \Indb_{\left\{\sum_{i=1}^{d-1} (w_i-1)=x\right\}} \frac{dp_{d}}{\lambda} \left(\prod_{i=1}^{d-1} \frac{w_iq_{w_i}}{\mu} \right) \nonumber \\
&&\qquad \cdot \frac{x !}{(H\circ G)^{(x)}(0)} ,
\end{eqnarray}
where the generating functions $G$ and $H$ are defined in Section~\ref{ph_tr_BP}, and $(H\circ G)^{(x)}(0)$ is the value at point $0$ of the $x$-th derivative of $H\circ G$. 

\begin{proposition} \label{prop:GammaPrime}
We define an alternating and multi-type branching process $\Gamma'$ as follows. 
The root is a $V$-node that gives birth to $d$ ($\leq \bd$) $E$-nodes with probability $p_d$. Each of these $E$-nodes has type $w$ with probability ${wq_{w}/\mu}$. 

An $E$-node of type $w$ gives birth to $\ell\in\{0,\dots,w-1\}$ $V$-nodes of types $0\leq x_{1}\leq x_{2}\leq \dots \leq x_{\ell}\leq \bd  \bw$ with probability $p^{(E)}(\ell,x_1,\dots,x_\ell|w)$ defined in Lemma \ref{lem:comput_L}.

Finally, a $V$-node of type $x\geq 0$ gives birth to $d-1$  $E$-nodes of types $w_1, \dots, w_{d-1}$ with probability $p^{(V)}(d-1,w_1,\dots,w_{d-1}|x)$ defined above.

As before, we set $\Gamma=\BP$ for the alternating branching process defined in Section~\ref{subs:def_BP}, $G=\GBP$ for the corresponding random graph model, and $\Gact$ for the random graph of active vertices in $G$ (as defined in Section~\ref{def_Gact}).
Then there is a coupling between $\Gamma'$ and $\Gact$ such that: 
\begin{itemize}
\item the number of $V$-nodes in the $i$-th $V$-generation of $\Gamma'$ is distributed as the number of vertices in the $i$-th generation of $\Gact$ (for any $i\geq 0$);
\item the type $w$ attached to each $E$-node in $\Gamma'$ corresponds to its 'size' in $\Gamma$ (\ie its offspring number in $\Gamma$ is $w-1$);
\item the type $x$ attached to each $V$-node in $\Gamma'$ corresponds to its number of children in $G$ (\ie its number of {\em grandchildren} in $\Gamma$ is $x$).
\end{itemize}
\end{proposition}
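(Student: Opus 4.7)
The plan is to couple $\Gamma'$ directly with the alternating branching process $\Gamma$ and the contagion it induces on $G=\GBP$. The fundamental observation is that, because $\Gamma$ is a tree, the only path in $G$ from the root to any $V$-node $v$ must pass through $v$'s parent $V$-node (in the tree terminology of Section~\ref{subs:modelBP}), so none of $v$'s $G$-children can become active before $v$ itself. Consequently, $v$'s activation depends only on the active neighbors inside its parent clique, and the global contagion decomposes into a cascade of one-clique sub-problems, each triggered at the moment the clique's $V$-parent becomes active.

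Given this locality, I would construct $\Gamma'$ by keeping precisely those $V$-nodes of $\Gamma$ that become active and those $E$-nodes whose $V$-parent is active, attaching to each $V$-node the type $x$ equal to its number of $G$-children (equivalently, its grandchildren count in $\Gamma$) and to each $E$-node the type $w$ equal to the size of the associated clique in $G$ (equivalently, one plus its $V$-offspring count in $\Gamma$). Three offspring laws must then be verified. The root's law is immediate from $\Gamma$: $D_0\sim\boldp$, and each $E$-child has size $w$ with probability $wq_w/\mu$ since $\Pb(\tW+1=w)=wq_w/\mu$. The $E$-offspring law $p^{(E)}(\ell,x_1,\dots,x_\ell|w)$ for an $E$-node of type $w$ whose $V$-parent is active is exactly the content of Lemma~\ref{lem:comput_L}, applied to the $w-1$ children whose "$G$-children count" variables $X_i$ are i.i.d.\ with distribution~\eqref{eq:gdchildren}.

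For the $V$-offspring law, I would use a direct conditioning argument. Unconditionally in $\Gamma$, a non-root $V$-node has joint offspring law $\frac{dp_d}{\lambda}\prod_{i=1}^{d-1}\frac{w_iq_{w_i}}{\mu}$. Its total $G$-children count is $X=\sum_i(w_i-1)$, whose probability generating function is $H\circ G$ (computed as $\Eb[G(s)^{\tD}]=H(G(s))$), so $\Pb(X=x)=(H\circ G)^{(x)}(0)/x!$. Dividing the joint law by this normalizing constant on the event $\{\sum_i(w_i-1)=x\}$ yields exactly $p^{(V)}(d-1,w_1,\dots,w_{d-1}|x)$ as defined in~\eqref{eq:V-offspring}.

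The remaining task, which I expect to be the main obstacle, is assembling these offspring laws into a genuine multi-type Galton--Watson process by justifying independence across distinct cliques and distinct $V$-nodes. I would proceed by a sequential, level-by-level exploration of $\Gamma$: reveal one new level at a time, run the contagion inside each newly-revealed clique using only the already-revealed $X_i$'s of its children (which requires peeking one level further down so that these types can be read off), and then use the built-in independence of the $\tD_k^{(i)}$ and $\tW_j^{(i)}$ across distinct nodes in $\Gamma$ to conclude that the yet-unrevealed portions of $\Gamma$ remain conditionally independent with the prescribed distributions. Once this is established, the matching of the $i$-th $V$-generation of $\Gamma'$ with the $i$-th graph-distance level of $\Gact$ follows by construction.
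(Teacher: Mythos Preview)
Your proposal is correct and follows essentially the same approach as the paper: invoke Lemma~\ref{lem:comput_L} for the $E$-offspring law, reduce the $V$-offspring law to the conditional distribution of $(\tD,\tW_1,\dots,\tW_{\tD})$ given $\sum_i \tW_i=x$, and compute the normalizing constant $\Pb(X=x)=(H\circ G)^{(x)}(0)/x!$ via the generating function identity $\Eb[s^{\sum_j\tW_j}]=(H\circ G)(s)$. If anything, you are more explicit than the paper about the coupling construction and the level-by-level exploration that secures the conditional independence across cliques; the paper simply asserts that the contagion spreads independently in two different cliques and moves directly to the $p^{(V)}$ computation.
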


\begin{proof}
Using Lemma~\ref{lem:comput_L} and the fact that the contagion spreads independently in two different cliques, we are left to prove that, when $\sum_{i=1}^{d-1} (w_i-1)=x$, the probability
\begin{eqnarray*}
P:= \Pb\left(\tD=d-1,\tW_1=w_1-1,\dots,\tW_{\tD}=w_{d-1}-1\left|\sum_{i=1}^{\tD-1} (\tW_i-1)=x\right.\right)
\end{eqnarray*}
is given by \eqref{eq:V-offspring}, where $\tD$ is the offspring number of a $V$-node in $\GBP$ (with generating function $H$), and ${(\tW_i)}_i$ the offspring numbers of $E$-nodes in $\GBP$ (with generating function $G$).

% Let $v$ be a $V$-node in $\GBP$. Let $D$ be its offspring number, and $W_j$ be the offspring number of each child $j$, $1\leq j\leq D$. We want to compute the probability, for $0\leq d\leq\bd$ and $ 0\leq w_1, w_2, \dots, w_d< 1/q$:
% \begin{eqnarray*}
%  p(d,w_1,\dots,w_c|x) =\Pb_x(D=d,W_1=w_1,\dots,W_D=w_d),
% \end{eqnarray*} 
% where $\Pb_x$ is the conditional probability, knowing that $\sum_j W_j=x$.
%  
% % On fixe un sommet $v$, dont on connait le type $x$ (\ie le nombre de ses petits-enfants). On veut detailler le nombre de ses enfants et leur type. Si $x=0$, alors c'est facile. On considere donc le cas ou $x\geq 1$.
% 
% If $\sum_{j=1}^d w_j\neq x$, then $p(d,w_1,\dots,w_c|x)=0$, so we assume that $\sum_{j=1}^d w_j = x$.

We have:
\begin{eqnarray*} \label{eq:N-and-D}
 P &=& \frac {\Pb(\tD=d-1,\tW_1=w_1-1,\dots,\tW_{\tD}=w_{d-1})}{\Pb\left(\sum_{i=1}^{\tD} \tW_i=x\right)}.
\end{eqnarray*}

The numerator is easy to compute:
\begin{eqnarray*} \label{eq:Numerator}
 \Pb(\tD=d-1,\tW_1=w_1-1,\dots,\tW_{\tD}=w_{d-1}) &=& \frac{dp_{d}}{\lambda} \prod_{i=1}^{d-1} \frac{w_i q_{w_i}}{\mu} 
\end{eqnarray*}

% We are left with the computation of the denominator: breaking down into the possible values of $D$, we need to know the summation of \iid random variables, 
We use generating functions to compute the denominator. We have, for all $y\in[0,1]$:
\begin{eqnarray*}
 \Eb\left[y^{\sum_{j=1}^{\tD} \tW_j}\right] &=& \sum_{d=1}^{\bd} \Pb(\tD=d-1) \Eb\left[y^{\sum_{j=1}^{d-1} \tW_j}\right]\\
&=& \sum_d \frac{dp_{d}}{\lambda} \left(\Eb\left[y^{\tW}\right]\right)^{d-1} \\
&=& (H\circ G) (y) \\
&=& \sum_x \frac{1}{x !} (H\circ G)^{(x)}(0) y^x,
\end{eqnarray*}
which leads to:
\begin{eqnarray*}
 \Pb\left(\sum_{j=1}^{\tD} \tW_j=x\right) &=& \frac{1}{x !} (H\circ G)^{(x)}(0),
\end{eqnarray*}
and ends the proof.

\end{proof}

% Donc on a un processus de branchement multi-types et a deux niveaux. Il reste encore a savoir quand il est infini ou non.

By definition, there is a cascade in $\GBP$ if $\Gact$ is infinite, which occurs if and only if the branching process $\Gamma'$ is infinite.

% {\todo revoir cette partie:
% 
% \subsection{Phase transition for the cascade phenomenon} \label{subs:Gamma_infinite}
% 
% % To conclude for the existence (or not) of a cascade of infected nodes in $\GBP$, we are left to study under which conditions the alternating branching process $\Gamma'$ defined in Proposition \ref{prop:GammaPrime} is infinite.
% 
% First we state the phase transition for multi-type branching processes (Theorem~\ref{th:Sevastyanov} \cite{Sevastyanov}). Secondly we use this theorem and Proposition \ref{prop:GammaPrime} to deduce the phase transition for the random graph $\Gact$ to be infinite with positive probability.}

\subsection{Phase transition for multi-type Galton-Watson branching processes} \label{subs:Sevastyanov}

We refer to Harris \cite[Chapter 2]{Harris} for further information on multi-type Galton-Watson branching processes.
Let $Z$ be a multi-type Galton-Watson branching process, with $k$ types, starting from only one individual of a given type $i_0$ (results are the same when the process starts from a finite number of individuals, but we are interested only in this case here). Let $M={(m_{ij})}_{1\leq i,j\leq k}$ be the first-moment matrix of $Z$, \ie $m_{ij}$ is the mean number of children of type $j$ that is created by a single individual of type $i$. 

We next recall Sevastyanov's theorem about the phase transition for multi-type Galton-Watson branching processes, that we will use in the next section.
%Section~\ref{ph_tr_Prime}.
%We next state the equivalent for multi-type branching processes of Theorem \ref{th2:BP} in Section \ref{sec2:BP}.
%As in Theorem \ref{th2:BP}, trivial cases have to be avoid. The condition ``$\rho_1<1$'' of Theorem \ref{th2:BP} is generalized by requiring that the process has no final classes. 
We define final classes as in \cite[Chapter 2]{Harris}. Let $m_{ij}^{(n)}$ be, for $n\geq 1$, the element in the $i$-th row and $j$-th column of $M^n$. The types $i$ and $j$ \textit{communicate} if $m_{ij}^{(n)},m_{ji}^{(n')}>0$ for some $n,n'\geq 1$. A type that communicates neither with itself nor with any other type is called \textit{singular}; a \textit{class} is a set of types, each pair of which communicate, that is not contained in any other set having this property. The types fall uniquely into singular types and mutually exclusive classes. A \textit{final class} $C$ is a class in which any individual of type $i\in C$ has probability one to give birth to exactly one individual with type in $C$ (other individuals whose type is not in $C$ may also be produced).
Let $\rho$ be the largest eigenvalue of $M$. Sevastyanov's theorem can be stated as follows:
\begin{theorem}[\cite{Sevastyanov}] \label{th:Sevastyanov}
 The probability of extinction of the branching process $Z$ is one if and only if
(a) $\rho\leq 1$ and
(b) there are no final classes.
\end{theorem}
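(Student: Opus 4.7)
The plan is to reduce Sevastyanov's theorem to the classical Perron--Frobenius extinction criterion for \emph{irreducible} multi-type Galton--Watson processes, by decomposing the type set along the communication classes of $M$. The classical ingredient I would take as known is: if $M$ is positively regular with Perron eigenvalue $\rho$, then extinction is almost sure iff $\rho\le 1$, \emph{unless} every individual produces exactly one child almost surely (the degenerate case in which $\rho=1$ and the process is deterministic). Without loss of generality I restrict to types reachable from $i_0$; the types split into classes $\cC_1,\dots,\cC_r$ of communicating non-singular types, together with singular types. Each non-singular $\cC$ yields an irreducible restricted mean matrix $M_{\cC}$ with Perron eigenvalue $\rho_{\cC}$, and a permutation making intra-class blocks diagonal and inter-class couplings block-upper-triangular (possible because reachability makes the class set a DAG) shows that the spectrum of $M$ is the union of the spectra of the $M_{\cC}$'s together with singular zeros, so $\rho=\max_{\cC}\rho_{\cC}$.

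For necessity, I would argue contrapositively. If $\rho>1$, pick $\cC$ with $\rho_{\cC}>1$; tracing only $\cC$-descendants of $\cC$-individuals yields an irreducible supercritical sub-process, which survives with positive probability once a $\cC$-type appears, and reachability from $i_0$ ensures that one appears with positive probability. If some $\cC$ is final, then by the very definition of a final class the count of $\cC$-type individuals is almost surely non-decreasing once it is positive, and reachability again gives positive probability of positivity. Either way, extinction is not certain.

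For sufficiency, assume $\rho\le 1$ and no class is final, and induct downward on the class DAG. For a sink class $\cC$ every $\cC$-individual produces offspring only in $\cC$ or in singular types, so the within-$\cC$ sub-process is irreducible with Perron eigenvalue $\rho_{\cC}\le 1$; non-finality of $\cC$ rules out the degenerate one-child law, and the classical criterion then forces almost-sure extinction inside $\cC$, after which the finitely many singular side-lines are almost surely finite. For a non-sink class, condition on its almost-surely-finite internal lineage and apply the induction hypothesis to the finitely many sub-processes it seeds in strictly downstream classes.

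The main obstacle is the \emph{critical} case of the irreducible theorem (\ie $\rho_{\cC}=1$ with a non-degenerate offspring law implying almost-sure extinction), whose standard proof requires a second-moment / martingale argument on the Perron eigenvector of $M_{\cC}$. A subtler bookkeeping point is matching the condition ``$\cC$ is not final'' with the non-degeneracy hypothesis of the classical irreducible theorem: the final-class definition allows an individual in $\cC$ to produce \emph{additional} offspring outside $\cC$, whereas the standard degenerate case concerns only offspring inside $\cC$. Care is needed to write the within-$\cC$ offspring distribution as a marginal of the full joint distribution and to check that non-finality forces this marginal to be non-degenerate; once this and the critical irreducible case are in hand, the DAG induction is essentially bookkeeping.
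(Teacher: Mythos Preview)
The paper does not prove this statement at all: it is quoted as a known result, with the proof attributed to Sevastyanov \cite{Sevastyanov} and the formulation to Harris \cite[Chapter~2, Theorem~10.1]{Harris}. There is therefore no ``paper's own proof'' to compare against; your proposal goes well beyond what the paper does.

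As a sketch of an independent proof, your outline is essentially the standard one and is sound. The block-upper-triangular decomposition along communication classes, the identification $\rho=\max_{\cC}\rho_{\cC}$, and the DAG induction in the sufficiency direction are exactly the ingredients used in textbook treatments. Your matching of ``$\cC$ is not final'' with non-degeneracy of the within-$\cC$ marginal is correct: by the paper's definition, $\cC$ is final precisely when every $\cC$-individual almost surely produces exactly one $\cC$-child, so non-finality is exactly the non-degeneracy hypothesis you need. One point you gloss over is that an irreducible class need not be positively regular (it may be periodic), while the classical result you invoke assumes positive regularity; this is handled by passing to the $d$-step process, where $d$ is the period of $M_{\cC}$, but it deserves a sentence. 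The critical case $\rho_{\cC}=1$ that you flag as the main obstacle is indeed where the real analytic work lies, and you are right that it requires the Perron eigenvector / convexity argument rather than mere bookkeeping.
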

The proof of this theorem originally appeared in a paper of Sevastyanov \cite{Sevastyanov}. The statement used here corresponds to \cite[Chapter 2, Theorem 10.1]{Harris}. 

{\bf Remark. }%\label{rk:positively_reg}
There is a classical version of Theorem \ref{th:Sevastyanov} (with a simpler proof) for positively regular processes.
The process $Z$ is said to be {\em positively regular} if there exists $n\geq 1$ such that $M^n$ is positive, \ie all entries of $M^n$ are (strictly) positive. The particular case of Theorem \ref{th:Sevastyanov} for positively regular processes can be found in \cite[Chapter 2, Theorem 7.1]{Harris}, or in any book dealing with multi-type branching processes (see also Mode \cite[Chapter 1]{Mode:MultitypeBP} or Athreya and Ney \cite[Chapter 5]{atne} for instance). Some examples of non-positively regular processes are also considered in \cite[Chapter 2]{Mode:MultitypeBP}. The hypothesis that the branching process has no final class is reduced to the hypothesis of non-singularity in the positively regular case.
We will see in the proof of Theorem~\ref{main_theorem} that the process we consider is not positively regular as soon as $p_1>0$ for instance, so that we need the stronger result of Sevastyanov.

\subsection{Proof of Theorem \ref{main_theorem}} \label{subs:main_pf}

%\begin{proof}
%We cannot directly apply Theorem \ref{th:Sevastyanov} to the branching process $\Gamma'$, due to the alternance between $V$-nodes and $E$-nodes in $\Gamma'$: 
We construct a non-alternating branching process $\Gamma'_V$ from $\Gamma'$ by erasing $E$-generations, so that a $V$-generation directly gives birth to the next $V$-generation, and we apply Theorem~\ref{th:Sevastyanov} to the branching process $\Gamma'_V$.

Note that we have the following fact, as mentioned in the remark following Theorem~\ref{th:Sevastyanov}: if $p_1>0$, then the process $\Gamma'_V$ is not positively regular, since $m_{0x}=0$ for all $x$. In addition, we cannot assume without loss of generality that $p_1=0$. Indeed the $V$-nodes of degree $1$ in the original branching process $\Gamma=\BP$ play a role in the contagion inside a clique, since the activation of the nodes with high degree depends on the activation of the nodes with low degree (cf. Lemma~\ref{lem:def_L} in Section~\ref{subs:lemma_cont_BP}).

%Equation \eqref{eq:m_x_0,x} is easy to verify, and 
Case \textit{(i)} is obvious. We assume that $q<1/2$.
We will show that there is no final classes in $\Gamma'_V$ if and only if either $p_2<1$ or $q_2<1$. Then applying Theorem~\ref{th:Sevastyanov} ends the proof (the case $p_2=q_2=1$ is direct). 
We first show that, if there exists a final class $C$ in $\Gamma'_V$, then necessarily $C=\{1\}$. We start with a definition and a lemma.

% We assume by contradiction that there is at least one final class $C$, and first prove that necessarily $C=\{1\}$. 
% 
% We will use several times the following idea: if a given configuration $\sigma$ has a positive probability to occur in $\Gamma'_V$ (\ie a node $v$ of a given type $x$ has a positive probability to give birth to nodes of types $x_1,\dots,x_{\ell}$, for some fixed $x_1,\dots,x_{\ell}$), then modifying a part of $\sigma$ (for instance the type of one grandchild of $v$) with an event that has also a positive probability to occur (of course, we will take into account the dependency with the $E$-nodes of $\Gamma'$) provides a new configuration $\sigma'$ that has still a positive probability to occur.
% % we have only to verify that the \textit{modified} part has a positive probability to occur.

\paragraph{\textbf{Definition.}}

Let $0<x\leq \bd \bw$. A \textit{configuration} starting from $x$ is an element $\sigma_x$ of the form 
\begin{eqnarray*}
 \sigma_x={\Big((w_i,x_{i1},\dots,x_{i\ell_i})\Big)}_{1\leq i\leq d-1},
\end{eqnarray*}
where $1\leq d\leq \bd$, $2\leq w_i\leq \bw$, $0\leq \ell_i\leq w_i-1$ and $0\leq x_{i1}\leq \dots\leq x_{i\ell_i}\leq \bd \bw$ for all $1\leq i\leq d-1$. We say that a configuration $\sigma_x$ has \textit{positive probability} to occur if
\begin{itemize}
\item $p_d>0$,
 \item $\sum_{i=1}^{d-1} (w_i-1) = x$,
\item $q_{w_i}>0$ and $p^{(E)}(\ell_i,x_{i1},\dots,x_{i\ell_i}|w_i)>0$ for all $1\leq i\leq d-1$.
\end{itemize}
In other words, a configuration $\sigma_x$ of positive probability is a possible realization for the next two generations starting from a $V$-node $v$ of type $x$ in $\Gamma'$ (it contains all the information about the children and grandchildren of $v$ in the sense that the $w_i$'s represent the types of the children of $v$ in $\Gamma'$, and $x_{ij}$ the types of its grandchildren). The next lemma provides a simple way to construct new configurations $\sigma'_x$ of positive probability, if one knows a given configuration $\sigma_x$ of positive probability (the proof is obvious, using the expression of $p^{(E)}(\ell_i,x_{i1},\dots,x_{i\ell_i}|w_i)$ given in Lemma \ref{lem:comput_L}):

\begin{lemma} \label{lem:positive_config}
Let $\sigma_x={\Big((w_i,x_{i1},\dots,x_{i\ell_i})\Big)}_{1\leq i\leq d-1}$ be a configuration that occurs with positive probability. Let ${(y_{ij})}_{i,j}$ be such that
$ y_{ij}\leq x_{ij}$ and $\Pb(X=y_{ij})>0$ for all $1\leq i\leq d-1$, $1\leq j\leq\ell_i$,
where the distribution of $X$ is given by \eqref{eq:gdchildren}.
Then the new configuration $\sigma'_x={\Big((w_i,y_{i1},\dots,y_{i\ell_i})\Big)}_{1\leq i\leq d-1}$ still occurs with positive probability.
\end{lemma}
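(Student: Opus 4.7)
The plan is to directly verify, term by term, that each factor appearing in the formula for $p^{(E)}(\ell_i, y_{i1}, \ldots, y_{i\ell_i}|w_i)$ from Lemma~\ref{lem:comput_L} is positive, and that the global conditions defining a positive-probability configuration ($p_d>0$ and $\sum_i (w_i-1)=x$) are preserved under the replacement $x_{ij}\mapsto y_{ij}$.

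First I would observe that $d$ and the $w_i$'s are unchanged in passing from $\sigma_x$ to $\sigma'_x$, so $p_d>0$, $q_{w_i}>0$, and $\sum_{i=1}^{d-1}(w_i-1)=x$ all carry over immediately. Fix now an index $i$ and focus on the $i$-th clique. In the product formula from Lemma~\ref{lem:comput_L}, the combinatorial prefactor $(w_i-1)!/((w_i-1-\ell_i)!\prod s_j!)$ and the tail factor $\bigl(\Pb(\lfloor q(X+w_i-1)\rfloor>\ell_i)\bigr)^{w_i-1-\ell_i}$ do not depend on the specific values of the order statistics, so they remain strictly positive. The marginal factor $\prod_{j=1}^{\ell_i}\Pb(X=y_{ij})$ is positive by the hypothesis $\Pb(X=y_{ij})>0$.

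The only nontrivial point is that the indicator $\prod_{j=1}^{\ell_i}\Indb_{\{\lfloor q(y_{i(j)}+w_i-1)\rfloor+1\leq j\}}$, written in terms of the \emph{sorted} values $y_{i(1)}\leq\cdots\leq y_{i(\ell_i)}$, must still equal one. For this I would prove the elementary monotonicity fact that $y_{i(j)}\leq x_{i(j)}=x_{ij}$ for every $j$: among the $j$ values $y_{i1},\ldots,y_{ij}$ the largest is at least the $j$-th smallest of the full multiset, hence
\[
y_{i(j)} \;\leq\; \max(y_{i1},\ldots,y_{ij}) \;\leq\; \max(x_{i1},\ldots,x_{ij}) \;=\; x_{ij},
\]
using $y_{ik}\leq x_{ik}$ and the fact that $x_{i1}\leq\cdots\leq x_{i\ell_i}$ is already sorted. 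Since $t\mapsto \lfloor q(t+w_i-1)\rfloor$ is nondecreasing, this gives
\[
\lfloor q(y_{i(j)}+w_i-1)\rfloor+1 \;\leq\; \lfloor q(x_{ij}+w_i-1)\rfloor+1 \;\leq\; j,
\]
where the last inequality is the positivity of $\sigma_x$. Thus every indicator in the formula for $\sigma'_x$ equals one, all factors are positive, and $p^{(E)}(\ell_i,y_{i(1)},\ldots,y_{i(\ell_i)}|w_i)>0$ for each $i$; multiplying over $i$ shows $\sigma'_x$ has positive probability. The main (and only) subtle point is the sorting step, which reduces to the short order-statistic inequality above.
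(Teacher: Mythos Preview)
Your proof is correct and follows the same approach the paper has in mind: the authors merely state that the lemma ``is obvious, using the expression of $p^{(E)}(\ell_i,x_{i1},\dots,x_{i\ell_i}|w_i)$ given in Lemma~\ref{lem:comput_L}'', and you have supplied precisely those details, including the order-statistic inequality $y_{i(j)}\leq x_{ij}$ that makes the indicator survive after re-sorting. One small inaccuracy: the combinatorial prefactor $(w_i-1)!/\bigl((w_i-1-\ell_i)!\prod_j s_j!\bigr)$ \emph{does} depend on the values through the multiplicities $s_j$, contrary to what you wrote; but since it is always a positive multinomial coefficient this does not affect the argument.
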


We assume that there is at least one final class $C$, and first prove that necessarily $C=\{1\}$. 
Let $x\in C$ be such that $\Pb(X=x)>0$, and let $v$ be a $V$-node in $\Gamma'$ with type $x$. We assume by contradiction that $x\neq 1$. Since $0$ is singular, necessarily $x>1$. By the definition of a final class, the number $N$ of children of $v$ whose type is in $C$ is one almost surely. Hence there exists a configuration $\sigma_x={\Big((w_i,x_{i1},\dots,x_{i\ell_i})\Big)}_{1\leq i\leq d-1}$ with a positive probability to occur and in which there exists a unique couple $(i_0,j_0)$ such that $x_{i_0j_0}\in C$. We distinguish several cases:
\begin{itemize}
 \item If there exists $(k,\ell)$ such that $x_{k\ell}>x_{i_0j_0}$, we consider the new configuration $\sigma'_x$ with $y_{ij}= x_{ij}$ if $(i,j)\neq (k,\ell)$ and $y_{k\ell}= x_{i_0j_0}$. Then $N=2$ in $\sigma'_x$, and $\sigma'_x$ has positive probability to occur by Lemma \ref{lem:positive_config}, which is a contradiction.
\item If there exists $(k,\ell)$ such that $x_{k\ell}<x_{i_0j_0}$, we consider the new configuration $\sigma'_x$ with $y_{ij}= x_{ij}$ if $(i,j)\neq (i_0,j_0)$ and $y_{i_0j_0}= x_{k\ell}$. Then $N=0$ in $\sigma'_x$, and $\sigma'_x$ has positive probability to occur by Lemma \ref{lem:positive_config}, which is a contradiction.
\item Otherwise, $v$ has only one grandchild (of type $x_{i_0j_0}$) in $\Gamma'$. We first consider the case where $w_{i_0}=2$. Since $\sum_{i=1}^{d-1} (w_i-1) = x>1$, there exists $k\neq i_0$ such that $w_k\geq 2$. We construct $\sigma'_x$ by replacing $w_k$ by $w_k-1$ $E$-nodes of type $2$, each of which having a $V$-node of type $x_{i_0j_0}$ as a child. Hence $N=w_k\geq 2$ in $\sigma'_x$, and it is easy to see that $\sigma'_x$ has positive probability to occur, which is a contradiction.
\item The remaining case is when $v$ has only one grandchild (of type $x_{i_0j_0}$) in $\Gamma'$, and $w_{i_0}>2$. We construct $\sigma'_x$ by replacing $(w_{i_0},x_{i_0j_0})$ by $(w_{i_0},x_{i_0j_0},x_{i_0j_0})$. Hence $N= 2$ in $\sigma'_x$, and it is easy to see that $\sigma'_x$ has positive probability to occur, which is a contradiction.
\end{itemize}
Hence $C=\{1\}$, which means that $v$ has only one grandchild in the original branching process $\Gamma=\BP$. It implies in particular that $q_2>0$, and we are left to compute the following quantity:
\begin{eqnarray*}
 p^{(E)}(\ell=1,x=1|w=2) &=& \Indb_{\lfloor2q\rfloor\leq 0} \Pb(X=1) \\
&=& \frac{2p_2}{\lambda}\frac{2q_2}{\mu}
\end{eqnarray*}
By the definition of a final class, $p^{(E)}(\ell=1,x=1|w=2)=1$, which implies that $p_2=q_2=1$, and ends the proof.

\section{Conclusion and perspectives} \label{sec:ccl}

We studied rigorously the contagion \cite{bl95,mor} on a clustered random graph model with overlapping communities. Our random graph model allows an arbitrary distribution for the community sizes, while the heuristic study of Hackett \textit{et al.} \cite{HMGleeson:New} was done on a random graph model with communities of size three. Up to our knowledge, these are the only two studies of this epidemic model on random graphs with overlapping communities.
%Up to our knowledge, such a study is new. 
There are several dependencies that made this study challenging: \textit{(i)} in the epidemic model itself, since the behavior of an individual depends on the behavior of all her neighbors; \textit{(ii)} in the random graph model considered, that allows an arbitrary distribution for both the community size and the number of communities an individual belongs to.
In addition, our study provides heuristics for the contagion on the one-mode projection of a random bipartite graph with arbitrary degree distributions, which is well appropriate for modeling real-world networks \cite{NewSW01:GCbipartite,GuillaumeLatapy06}.

We showed that our epidemic is completely described by a multi-type and alternating branching process, and use a non-classical theorem on phase transitions for multi-type branching processes, referred to as Sevastyanov's theorem, to prove a phase transition for our process.
This opens the way to the study of the clustering effect on the cascade phenomenon in this case, as done in \cite{HMGleeson:New} or \cite{arxiv12}.

\thanks{The authors acknowledge the support of the French Agence Nationale de la Recherche (ANR) under reference ANR-11-JS02-005-01 (GAP project).}

\bibliographystyle{plain}

\begin{thebibliography}{10}

\bibitem{atne}
Krishna~B. Athreya and Peter~E. Ney.
\newblock {\em Branching processes}.
\newblock Springer-Verlag, New York, 1972.
\newblock Die Grundlehren der mathematischen Wissenschaften, Band 196.

\bibitem{BA99}
Albert~L. Barab{\'a}si and R{\'e}ka Albert.
\newblock Emergence of scaling in random networks.
\newblock {\em Science}, 286:509--512, 1999.

\bibitem{bc78:degseq}
E.~A. Bender and E.~R. Canfield.
\newblock The asymptotic number of labeled graphs with given degree sequences.
\newblock {\em Journal of Combinatorial Theory - Series A}, 24:296--307, 1978.

\bibitem{bl95}
Lawrence~E. Blume.
\newblock The statistical mechanics of best-response strategy revision.
\newblock {\em Games Econom. Behav.}, 11(2):111--145, 1995.
\newblock Evolutionary game theory in biology and economics.

\bibitem{BJR11:sparseRGwithC}
B{\'e}la Bollob{\'a}s, Svante Janson, and Oliver Riordan.
\newblock Sparse random graphs with clustering.
\newblock {\em Random Structures and Algorithms}, 38:269--323, 2011.

\bibitem{britton-2007}
Tom Britton, Maria Deijfen, Andreas~N. Lager{\aa}s, and Mathias Lindholm.
\newblock Epidemics on random graphs with tunable clustering.
\newblock {\em J. Appl. Probab.}, 45(3):743--756, 2008.

\bibitem{CL:netgcoop11}
E.~Coupechoux and Marc Lelarge.
\newblock Impact of clustering on diffusions and contagions in random networks.
\newblock In {\em NETwork Games, COntrol and OPtimization. International
  Conference. 5th 2011. (NETGCOOP 2011)}, pages 112--118, 2011.

\bibitem{arxiv12}
Emilie Coupechoux and Marc Lelarge.
\newblock How clustering affects epidemics in random networks, 2012.

\bibitem{DN08}
R.~W.~R. Darling and J.~R. Norris.
\newblock Differential equation approximations for markov chains.
\newblock {\em Probability Surveys}, 5:37--79, 2008.

\bibitem{OrderStat}
Gaoxiong Gan and Lee~J. Bain.
\newblock Distribution of order statistics for discrete parents with
  applications to censored sampling.
\newblock {\em Journal of Statistical Planning and Inference}, 44(1):37--46,
  1995.

\bibitem{Gleeson08:modular}
James~P. Gleeson.
\newblock Cascades on correlated and modular random networks.
\newblock {\em Phys. Rev. E}, 77:046117, Apr 2008.

\bibitem{GuillaumeLatapy06}
Jean-Loup Guillaume and Matthieu Latapy.
\newblock Bipartite graphs as models of complex networks.
\newblock {\em Physica A}, 371:795--813, 2006.

\bibitem{HMGleeson:New}
Adam Hackett, Sergey Melnik, and James~P. Gleeson.
\newblock Cascades on a class of clustered random networks.
\newblock {\em Physical Review E}, 83, 2011.

\bibitem{Harris}
Theodore~E. Harris.
\newblock {\em The theory of branching processes}.
\newblock Springer, Berlin, 1963.

\bibitem{klein}
Jon Kleinberg.
\newblock {\em {Cascading Behavior in Networks: Algorithmic and Economic
  Issues}}.
\newblock Cambridge University Press, 2007.

\bibitem{lel:diff}
Marc Lelarge.
\newblock Diffusion and cascading behavior in random networks.
\newblock {\em Games and Economic Behavior}, 75(2):752--775, 2012.

\bibitem{Mode:MultitypeBP}
Charles~J. Mode.
\newblock {\em Multi-Type Branching Processes - Theory and Application}.
\newblock American Elsevier Publishing Company, Inc., New York, 1971.

\bibitem{mor}
Stephen Morris.
\newblock Contagion.
\newblock {\em Rev. Econom. Stud.}, 67(1):57--78, 2000.

\bibitem{new03}
M.~E.~J. Newman.
\newblock Properties of highly clustered networks.
\newblock {\em Phys. Rev. E}, 68(2):026121, Aug 2003.

\bibitem{New03models}
M.~E.~J. Newman.
\newblock {\em Random graphs as models of networks}.
\newblock Handbook of Graphs and Networks. S. Bornholdt, H. G. Schuster,
  wiley-vch, berlin edition, 2003.

\bibitem{new-book03}
M.~E.~J. Newman.
\newblock The structure and function of complex networks.
\newblock {\em SIAM Rev.}, 45(2):167--256 (electronic), 2003.

\bibitem{NewSW01:GCbipartite}
M.~E.~J. Newman, S.~H. Strogatz, and D.~J. Watts.
\newblock Random graphs with arbitrary degree distributions and their
  applications.
\newblock {\em Phys. Rev. E}, 64:026118, Jul 2001.

\bibitem{Sevastyanov}
B.~A. Sevastyanov.
\newblock The theory of branching random processes.
\newblock {\em Uspehi Matemat. Nauk}, 6:47--99, 1951.
\newblock [Russian].

\bibitem{vega07}
Fernando Vega-Redondo.
\newblock {\em Complex social networks}, volume~44 of {\em Econometric Society
  Monographs}.
\newblock Cambridge University Press, Cambridge, 2007.

\bibitem{ws98}
D.~J. Watts and S.~H. Strogatz.
\newblock Collective dynamics of 'small-world' networks.
\newblock {\em Nature}, 393(6684):440--442, June 1998.

\bibitem{wat02}
Duncan~J. Watts.
\newblock A simple model of global cascades on random networks.
\newblock {\em Proc. Natl. Acad. Sci. USA}, 99(9):5766--5771 (electronic),
  2002.

\end{thebibliography}

\end{document}